\documentclass[a4paper, 11pt]{article}

\usepackage{authblk}

\title{
	Boundaries of Hypertrees, and Hamiltonian Cycles\\ 
	in Simplicial Complexes}

\author[1]{Rogers~Mathew}
\author[2]{Ilan~Newman}
\author[2]{Yuri~Rabinovich}
\author[2]{Deepak~Rajendraprasad}

\affil[1]{
 	Department of Computer Science, Indian Institute of Technology Kharagpur, India.
}
\affil[2]{
 	Department of Computer Science, University of Haifa, Israel.
}

\usepackage[margin=2cm]{geometry}
\bibliographystyle{plain}

\newif\ifRecompileFigures
\RecompileFiguresfalse
\ifRecompileFigures
	\usepackage{pstricks}
	\usepackage{pstricks-add}
	\psset{gridcolor=yellow, gridlabelcolor=yellow, subgridcolor=yellow,%
\else
	\usepackage{graphicx}
\fi


\usepackage{amsmath,amsthm, amssymb}

\newtheorem{theorem}{Theorem}[section]

\newtheorem{observation}[theorem]{Observation}
\newtheorem{corollary}[theorem]{Corollary}

\newtheorem{lemma}[theorem]{Lemma}
\newtheorem{claim}[theorem]{Claim}

\theoremstyle{definition}
\newtheorem{definition}[theorem]{Definition}

\theoremstyle{remark}
\newtheorem*{remark}{Remark}

\newtheoremstyle{insideclaims}
  {5pt}
  {5pt}
  {\itshape}
  {}
  {}
  {}
  {0.5em}
  {{\itshape \thmname{#1}} {\itshape \thmnumber{#2}}{\itshape\thmnote{#3}}.}
\theoremstyle{insideclaims}
\newtheorem{insideclaim}{Claim}[theorem]

\def\into{\longrightarrow}
\def\N{\mathbb{N}}
\def\Z{\mathbb{Z}}
\def\Q{\mathbb{Q}}

\def\F{\mathbb{F}}

\newcommand{\size}[1]{\vert #1 \vert}
\newcommand{\ceil}[1]{\left\lceil #1 \right\rceil}

\DeclareMathOperator{\Link}{Link}
\DeclareMathOperator{\Cone}{Cone}
\DeclareMathOperator{\Ext}{Ext}
\DeclareMathOperator{\cora}{co-rank}
\newcommand{\cupdot}{\oplus}
\newcommand{\bigcupdot}{\bigoplus}
\DeclareMathOperator{\Cut}{Cut}
\DeclareMathOperator{\Fill}{Fill}

\DeclareMathOperator{\supp}{supp}

\def\boundary{\partial}

\def\Xan{X}
\def\Yan{Y}
\def\C{\mathcal{C}}

\begin{document}

\maketitle

\begin{abstract}
A {\em $d$-hypertree} on $[n]$ is a maximal acyclic $d$-dimensional simplicial complex with full $(d-1)$-skeleton on the vertex set $[n]$. 
Alternatively, in the language of algebraic topology, it is a minimal $d$-dimensional simplicial complex $T$ (assuming full $(d-1)$-skeleton) such that $\tilde{H}_{d-1}(T;\F)=0$. 

The $d$-hypertrees are a basic object in combinatorial theory of simplicial complexes.
They have been studied; and yet, many of their structural aspects remain poorly understood.

In this paper we study the boundaries $\partial_d T$ of $d$-hypertrees, and the fundamental $d$-cycles defined by them. 
Our findings include: 

\begin{itemize}
\item 
A full characterization of $\partial_d T$ over $\F_2$ for $d \leq 2$, and some partial results for $d \geq 3$.

\item
Lower bounds on the maximum size of a largest simple $d$-cycle on $[n]$. 
In particular, for $d=2$, we construct a {\em Hamiltonian $d$-cycle} $H$ on $[n]$, 
i.e., 
a simple $d$-cycle of size ${{n-1} \choose d} + 1$. 
For $d\geq 3$, we construct a simple $d$-cycle of size ${{n-1} \choose d} - O(n^{d-2})$.

\item 
Observing that the maximum of the expected distance between two vertices chosen uniformly at random in a tree ($1$-hypertree) on $[n]$ is at most $\thicksim n/3$,
attained on Hamiltonian paths, we ask a similar question about $d$-hypertrees. 
{\em How large can be the {\em average} size of a fundamental cycle of a $d$-hypertree $T$ 
(i.e., the expected size of the dependency created by adding a $d$-simplex on $[n]$, chosen uniformly at random, to $T$)}? 
For every $d \in \N$, we construct an infinite family of $d$-hypertrees $\{T\}$ with the average size of a fundamental cycle 
at least $c_d\, |T| \,=\, c_d\,{n-1 \choose d}$, where $c_d$ is a constant depending on the dimension $d$ alone.    
\end{itemize}

\end{abstract}

\section{Introduction}

Graph Theory, and in particular its portion dealing with connectivity-related notions such as 
cycles, trees and cuts, and their packing, covering, polytopes, etc., plays a fundamental 
role in CS, and is at the core of Combinatorial Optimization and the theory of Algorithms. 
While graphs are well-suited for modeling systems with pairwise interactions, 
modeling of more complex multiway interactions is required in fields like
Game Theory, Distributed Systems, Image Processing, etc. 
This calls for a robust higher-dimensional generalization 
of graphs. Hypergraphs provide a partial answer, however, it will be noted 
that the graph-theoretic connectivity notions do not even have a widely-agreed formulation 
in this context, much less a coherent theory.
   
In contrast, simplicial complexes posses a richer structure than
hypergraphs, and do allow a natural and meaningful generalization 
of trees, cycles, etc. In fact, both simplicial {\em homology} and {\em homotopy} 
theories can be viewed as high-dimensional connectivity theories. This paper is
dedicated to the study of the combinatorial structure of 
these generalizations, namely $d$-hypertrees and $d$-cycles.

\paragraph{On simplicial complexes and their applications.}
Simplicial complexes have been introduced by Poincar\'{e} at the turn of the 20'th
century as a tool for creating a rigorous foundation for what has later evolved into the {homology} theory for smooth manifolds.  The introduction of clear combinatorial notions had immediately led to flourishing of Combinatorial Topology. Although later the simplicial 
methods were gradually replaced by a powerful algebraic apparatus,  simplicial complexes have retained their importance in this branch of mathematics up to this day.
Gradually, simplicial complexes along with the associated topological methods spread through 
virtually all branches of Discrete Math.~and Computer Science%
\footnote{%
See, e.g., the excellent book of Matou$\rm\check{s}$ek~\cite{matousek2008borsuk}, and also~\cite{bjorner1995topological}, for some beautiful examples of uses of topological methods in 
Graph Theory and vice versa. The most famous applications of simplicial complexes in Theoretical CS include the study of evasiveness of nontrivial monotone graph properties of Kahn et al.~\cite{kahn1984topological}, and the consensus problem in the area of asynchronous concurrent computation~\cite{herlihy1999topological,saks2000wait}
}. 

In recent years, the study of simplicial complexes has continued at an
ever accelerating rate. A series of brilliant papers dedicated to the study
of hard topological invariants like embedding dimension and homotopy groups, see e.g.,~\cite{matousek2011hardness,cadek2013extending,matousek2014embeddability},
has significantly promoted our understanding in this direction.
Spectacular advances in the study of threshold phenomena 
in random complexes, see e.g.~\cite{linial2006homological,babson2011fundamental,aronshtam2013collapsibility,aronshtam2015TopHomology,aronshtam2015threshold,linial2014phase}, have virtually shaped a new important
topological-combinatorial area out of thin air. 
Another important similar development had to do with constructing expanders of various types,
see e.g.,~\cite{gromov2010singularities,fox2011overlap,kaufman2014ramanujan,lubotzky2015random}.
In practical CS, topological methods for data analysis have become
exceedingly popular. For an overview of this field see, e.g., the 
summaries~\cite{carlsson2009topology,ghrist2014elementary}. In addition to classical methods
of reconstructing geometrical data from properly produced samples (see,
e.g.,~\cite{dey2006curve}), new methods based on {\em persistent homology} have emerged, and become widespread. 

To sum up, the theory of simplicial complexes provides valuable tools
for a large variety of theoretical and practical areas. Situated at the meeting place of Geometry and Combinatorics, it provides a language  understandable to both, and  serves as an important vehicle for exchange of ideas and results between the two areas.

\paragraph{Our contribution.} 
In view of the above, it is surprising that the 
combinatorial structure of simplicial complexes is rather poorly understood at the moment, 
and even the simplest questions about the structure of the basic objects such 
as $2$-hypertrees, $2$-cycles and $2$-cocycles, run into unknown territory. 

A word about $d$-hypertrees is due. While $d$-cycles and $d$-cocycles are standard and very basic
notions of Combinatorial/Algebraic Topology requiring no introduction, the notions of hypertree, although standard, is less common. A $d$-hypertree $T$ is a maximal $d$-acyclic subcomplex of the complete
$d$-dimensional complex $K_n^d$ on the vertex set $[n]$. This definition implies, in particular,
that $T$ has a full $(d-1)$-skeleton. Equivalently, a $d$-hypertree is a minimal subcomplex 
$K_n^{d-1} \subset T \subseteq K_n^d$ such that $\tilde{H}_{d-1}(T,\F)=0$ with respect to the
underlying field $\F$. Hypertrees were introduced and first studied by Kalai in~\cite{kalai1983enumeration}, and were 
subsequently studied, e.g., in~\cite{linial2006homological,meshulam2009homological,linial2010SumComplexes,newman2013multiplicative,newman2015connectivity}. 

This paper (and its companion~\cite{linial2014Shadows}) is close in spirit to the 
afore-mentioned research of the threshold probabilities of various properties of random $d$-complexes. We study a number of extremal-type problems pertaining to the combinatorial/topological structure of $d$-hypertrees and $d$-cycles.   

Our first goal is the characterization of the boundaries of $d$-hypertrees.
Although at first glance it may sound a rather exotic question, its solution provides an 
approach to solving mainstream problems about $d$-hypertrees and
$d$-cycles. For $d=2$ we obtain a complete characterization;
for larger $d$'s an approximate answer is provided.

Next, we study simple $d$-cycles of the maximum size over the vertex set $[n]$.
Clearly, the size of such a cycle cannot exceed the size of a $d$-hypertree
by more than 1. Call such $d$-cycles {\em Hamiltonian}. As in graphs, and for the same reason, all the $d$-hypertrees
over $[n]$ have the same size ${n-1 \choose d}$.
Call also a $d$-hypertree Hamiltonian
if it is obtainable from a Hamiltonian $d$-cycle by removing one of its $d$-simplices.   
Do such $d$-cycles and $d$-hypertrees exist?  We show that for $d=2$ the answer  
is positive if and only if $n\equiv 0$ or $3 \mod 4$. For larger $d$'s we can only 
show that there exist simple $d$-cycles of size at least ${n-1 \choose d} - O(n^{d-2})$.

Hamiltonian $d$-hypertrees as defined above are just one possible generalization of
Hamiltonian paths in graphs. For another, metric generalization, observe that
Hamiltonians paths are the extremal (connected) graphs with respect to the expected distance between a random pair of vertices $u,v$, attaining value $\thicksim n/3$. Interpreting the distance between the vertices $u$ and $v$ in a tree $T$ as the size (minus 1) of the fundamental cycle obtained by adding the edge $(v,u)$ to it, one naturally arrives to the following problem about $d$-hypertrees. How large can be the average size of an fundamental cycle of a $d$-tree $T$ (i.e, the dependency created by adding a random uniform $d$-simplex on $[n]$ to $T$)?  
For every $d\in \N$, we explicitly construct a $d$-hypertree $T$ 
attaining value $\geq c_d\,{{n-1} \choose {d}}$ for some $c_d$ depending only on $d$.
I.e., a constant fraction of the fundamental cycles with respect to $T$ are of size commensurate 
with $T$. 

We work mostly with an interesting subclass of collapsible $d$-hypertrees, systematically employing a technique which we call the conical extension.
Both this class of $d$-hypertrees, and the technique used, are potentially useful
for other hypertree-related constructions.

\subsection{Simplicial complexes}
\label{sectionSimplicialComplexes}

\paragraph{Some standard notation.} We use $\F_2$ to denote the two element finite field and $\Q$ to denote the field of rational numbers. The notation $[n]$ is a shorthand for the set $\{1, \ldots, n\}$. If $A$ and $B$ are sets, then $A \oplus B$ denotes their symmetric difference and if $A$ and $B$ are vectors over $\F_2$, then it denotes their vector sum. 

A {\em $d$-dimensional simplex} ({\em $d$-simplex} or {\em $d$-face} for short) is a set with $d+1$ elements.  A {\em simplicial complex} $X$ is a collection of simplices that is closed under containment. That is, if $A \in X$ then every subset of $A$ also belongs to $X$. The union of all the simplices in $X$ is called the {\em vertex-set} $V(X)$ of $X$. In this article, we will always assume that $V(X)$ is finite and identify it with $[n]$, where $n = |V(X)|$. The {\em dimension} of a simplicial complex $X$ is the largest dimension among all the simplices in $X$. Further, $X$ is called {\em pure} if all its maximal faces are of the same dimension.
The set of $i$-dimensional simplices of $X$ is denoted by $X^{(i)}$. The {\em complete $n$-vertex $d$-dimensional simplicial complex} $K^d_n = \{ \sigma \subset [n] : |\sigma| \leq d+1\}$ contains all the simplices in $[n]$ with dimension at most $d$. 

\paragraph{Chains.} 
Given a field $\F$ and a simplicial complex $X$, an $\F$-weighted formal sum $Z$ of the $d$-faces in $X$ is called a {\em $d$-chain} over $\F$, i.e., $Z = \sum_{\sigma \in X^{(d)}}{c_{\sigma} \sigma}$, where $c_{\sigma} \in \F, \forall \sigma \in X^{(d)}$. The set $\supp(Z)$ of $d$-simplices with a non-zero weight in $Z$ is called its {\em support} and $|\supp(Z)|$ is called the {\em size} $|Z|$ of $Z$. 
The vertex-set of $X$ is also referred to as the vertex-set of $Z$. The collection of all $d$-chains of $K^d_n$ form a vector space $\C_d$ over $\F$ with dimension ${n \choose d+1}$. 

\paragraph{The boundary operator.}
The {\em boundary} $\boundary_d \sigma$ of a $d$-simplex $\sigma = \{v_0, \ldots, v_d\}$, with $v_0 < \cdots < v_d$, is the $(d-1)$-chain $\sum_{i=0}^d (-1)^i (\sigma \setminus \{v_i\})$. The signs disappear when one works over $\F_2$. The linear extension of the same to the whole of $\C_d$ is the {\em boundary operator} $\boundary_d : \C_d \into \C_{d-1}$. It is a well known and easily verifiable fact that $\boundary_{d-1} \boundary_d = 0$. The subscript of the operator may be omitted when unambiguous. 

\paragraph{Cycles, forests and trees.}
A $d$-chain $Z$ is called a {\em $d$-cycle} if $\boundary Z = 0$. We refer to $0 \in \C_d$  as the {\em trivial $d$-cycle}. Further, if there is no nontrivial cycle supported on a proper subset of $\supp(Z)$, then $Z$ is called {\em simple}. A set $F$ of $d$-faces is called a {\em $d$-forest} (or {\em acyclic}) over a field $\F$ if $F$ does not contain the support of any nontrivial $d$-cycle over $\F$. Further, $F$ is called a {\em $d$-hypertree} or a {\em $d$-tree} on $[n]$ if it is a maximal $d$-forest in $K^d_n$ under inclusion. It is easy to see that for a $d$-forest $F$, the set of $(d-1)$-chains $\{\boundary \sigma : \sigma \in F\}$ is linearly independent in $\C_{d-1}$ and when $F$ is a $d$-tree this set becomes a basis for the cycle-space in $K^{d-1}_n$, that is the kernel of $\boundary_{d-1}$. Hence all $d$-trees on $[n]$ have size ${n-1 \choose d}$. 

\paragraph{Duals and cuts.}
The {\em dual} of a $d$-chain $Z = \sum_{\sigma \in \supp(Z)} c_{\sigma} \sigma$ over $[n]$ is the $(n-d-2)$-chain $Z^* = \sum_{\sigma \in \supp(Z)} c_{\sigma} \bar{\sigma}$, where $\bar{\sigma} = [n] \setminus \sigma$. A {\em cut} is the dual of a simple cycle. Over $\F_2$, the dual of a set of $d$-simplices $Z$ over $[n]$ is the set $Z^*$ of $(n-2-d)$-simplices $\{[n] \setminus \sigma : \sigma \in Z\}$.
 
\paragraph{Degree.}
The {\em degree} of an $i$-face $\sigma$ in a simplicial complex $X$ is the number of $(i+1)$-faces in $X$ which contains $\sigma$. The {\em maximum (resp., minimum) degree} of a $d$-dimensional complex $X$ is the largest (resp., smallest) degree among all its $(d-1)$-faces. 

\paragraph{Collapsibility.}
In a $d$-dimensional simplicial complex $X$, a $(d-1)$-face $\tau$ is called {\em exposed} if its degree is $1$, that is, it belongs to exactly one $d$-face $\sigma$ of $X$. An elementary $d$-collapse on $\tau$ consists of the removal of $\sigma$ and $\tau$ from $X$. We say that $X$ is {\em $d$-collapsible} if every $d$-face of $X$ can be removed by a sequence of elementary $d$-collapses. It is easy to see that if $X$ is $d$-collapsible, then $X^{(d)}$ is a $d$-forest over any field.

\paragraph{Hamiltonian cycles and trees.}
A $d$-cycle $Z$ on $[n]$ is called {\em Hamiltonian} if it is simple and has size ${n-1 \choose d} + 1$. Notice that this is the largest possible size of a simple $d$-cycle on $[n]$. Removing any $d$-face from the support of $Z$ results in a $d$-tree. Such $d$-trees, i.e., $d$-trees contained in the support of a Hamiltonian $d$-cycle, are called {\em Hamiltonian $d$-trees}.

\paragraph{Fillings and filling-volume.}
For any $(d-1)$-cycle $Z$ and any $d$-tree $T$, both on $[n]$, there exists a unique $d$-chain $F$ supported on $T$ such that $\boundary F = Z$. This $d$-chain $F$ is called the {\em filling} of $Z$ in $T$ and is denoted by $\Fill(Z, T)$. When $Z$ above is the boundary $\boundary \sigma$ of some $d$-simplex $\sigma \in K^d_n$, we call $\Fill(\boundary\sigma, T)$ as the filling of $\sigma$ in $T$ and denote it by just $\Fill(\sigma, T)$. 
The {\em average filling-volume} $\mu(T)$ of $T$ is ${n \choose d+1}^{-1} \sum_{\sigma \in K^d_n} {|\Fill(\sigma, T)|}$.

\paragraph{Abuses of notation (\underline{Important}).}
The first type of abuse is to blur the distinction between a pure $d$-dimensional complex $X$ and its set of $d$-faces $X^{(d)}$. For example, we might say that $X$ is a forest (resp., a tree) to mean that $X^{(d)}$ is a $d$-forest (resp., a $d$-tree). In the other direction, if $Y$ is a collection of $d$-faces, by collapsibility (resp., maximum degree, minimum degree) of $Y$ we mean the collapsibility (resp., maximum degree, minimum degree) of the simplicial complex obtained by taking the subset-closure of $Y$. The second type of abuse is to blur the  distinction between a $d$-chain and its support when working over $\F_2$. We will say more about this in Section \ref{sectionPreliminaries}.

\subsection{Preliminaries}
\label{sectionPreliminaries}

\subsubsection*{Working over the field $\F_2$}

A $d$-chain over $\F_2$ can be identified bijectively with its support. Addition of two chains will correspond to the symmetric difference of their supports. Boundary of a collection of $d$-simplices $X$ is understood as the boundary of the unique $d$-chain over $\F_2$ whose support is $X$. It may be helpful to note that, in this case, the boundary of a $d$-simplex is just the collection of $(d-1)$-simplices contained in it and thus $\boundary X$ is the collection of $(d-1)$-faces with an odd degree in $X$. 
%
Henceforth in this article, we will work only over $\F_2$, although our results extend to $\Q$ (cf. Section~\ref{sectionRationalRemarks}).

\subsubsection*{Basic operators and related notions}


Besides the boundary operator $\boundary$ discussed above, additional standard
and less standard operators will be used in this paper.

\paragraph{Link.} The {\em link} operator maps a simplicial complex $X$ to the neighborhood 
of a vertex $v$:
\[
\Link_v (X) ~=~ \{ \sigma \setminus \{v\} ~|~ \sigma \in X, ~v \in \sigma\}\,.
\]
With a slight abuse of notation, we shall treat $\Link_v(X^{(d)})$ as an operator 
mapping a collection $X^{(d)}$ of $d$-simplices over $[n]$, to a collection
of $(d-1)$-simplices over $[n]\setminus \{v\}$. Observe that a link of 
a $d$-cycle $Z$ is a $(d-1)$-cycle, since in this case 
$\Link_v(Z) = \boundary \{\sigma \;|\; i \in \sigma \in Z\}$, 
and $\boundary\boundary = 0$.

\paragraph{Cone.} The {\em cone} operator is the right inverse of the link operator;
it maps a collection $X^{(d)}$ of $d$-simplices over $[n]$, to a collection
of $(d+1)$-simplices over $[n] \cup \{v\}$, where $v \notin [n]$: 
\[
\Cone_v(X^{(d)}) ~=~ \left\{\,\sigma \cup \{v\} ~|~ \sigma \in X^{(d)}\, \right\}\;.
\]
It is easy to verify that the boundary of $\Cone_v(Y)$ is given by
\begin{equation}
	\boundary \Cone_v(Y)  ~=~ Y \;\oplus\; \Cone_v (\boundary Y). 
\label{eqnBoundaryLift}
\end{equation}

\paragraph{Conical extension.} Let $X^{(d)}$ and $Y^{(d-1)}$ be collections of $d$- and $(d-1)$-simplices,
respectively, over $[n]$, and let $v \not\in [n]$. We define the {\em conical extension}
$\Ext_v(X^{(d)}, Y^{(d-1)})$, a collection of $d$-simplices on $[n] \cup \{v\}$, by
\[
\Ext_v(X^{(d)}, Y^{(d-1)}) ~=~ X^{(d)} \;\oplus\; \Cone_v(Y^{(d-1)})~.
\]
(The two summands are in fact disjoint.)
It is easy to verify using (\ref{eqnBoundaryLift}), that if both $X^{(d)}$ and 
$Y^{(d-1)}$ are acyclic, then so is $\Ext_v(X^{(d)}, Y^{(d-1)})$. Similarly,
if both $X^{(d)}$ and $Y^{(d-1)}$ are collapsible, then so is $\Ext_v(X^{(d)}, Y^{(d-1)})$.

Moreover, assume that $X^{(d)}$ and $Y^{(d-1)}$ are, respectively, $d$- and $(d-1)$-
hypertrees on $[n]$. Keeping in mind that an acyclic collection of $k$-simplices 
on $[n]$ is a $k$-hypertree if and only if its size is ${n-1 \choose k}$, we conclude that 
$\Ext_v(X^{(d)}, Y^{(d-1)})$ is a $d$-hypertree on $[n] \cup \{v\}$, since
it is acyclic, and its size is ${n-1 \choose d} + {n-1 \choose d-1} \,=\, {n \choose d}$.

This leads to the following purely combinatorial definition-theorem, fundamental for this paper:
\begin{definition}[Nice trees]
\label{def:nice}
Let $\{T_k\}_{k={d+1}}^{n-1}$ be a sequence of $(d-1)$-hypertrees, where $T_k$ has an
underlying vertex set $[k]$, respectively. Let $H_{d+1}= \{1,\ldots,d+1\}$ (a single $d$-simplex), and define 
recursively, for $k=d+1,\ldots, n-1$,
\[
H_{k+1} \;=\; \Ext_{k+1}(H_k, \,T_k)\,. 
\]
Then, each $H_k$ is a $d$-hypertree on $[k]$. Call such hypertrees {\em nice}.\\
They form a subfamily of collapsible $d$-hypertrees. For $d=1$, all trees are nice.
\end{definition}

\paragraph{Filling and conical extension.}
The last operator to be discussed in this section is $\Fill(*,T)$, a bijection
between the $(d-1)$-cycles on $[n]$ and subsets of $d$-simplices of a fixed $d$-tree $T$.
Let $Z \in {\cal Z}_{d-1}$ be a $(d-1)$-cycle on $[n]$. Then, by the definition of $d$-hypertree, there exits a unique subset $F$ of $d$-simplices in $T$, such that 
$\boundary F = Z$. We define $\Fill(Z,T) = F$. Due to acyclicity of $T$, this is indeed a bijection. Observe that the operator $\Fill(*,T)$ is linear: 
~$\Fill(Z_1 \oplus Z_2,T) = \Fill(Z_1,T)  \oplus \Fill(Z_2,T)$.
Finally, to put things in a graph-theoretic perspective, observe that for 
a $d$-simplex $\sigma$ on $[n]$, $\Fill(\boundary \sigma,T) \oplus \sigma$ is 
an analogue of what is called in graph theory {\em a fundamental cycle of $\sigma$ 
with respect to $T$}. 

The following claim about fillings of conical extensions will be used in Section~\ref{sectionCapSize}.

\begin{claim}
\label{cl:captrick}
Let $T_{n-1}^d$ and $T_{n-1}^{d-1}$ be, respectively, a $d$-hypertree and a $(d-1)$-hypertree on $[n-1]$. They define, by means of conical extension, a new
$d$-hypertree $T_n^d$ on $[n]$:~ $T_n^d \;=\; \Ext_n (T_{n-1}^d, T_{n-1}^{d-1})$.
For any $Z \in {\cal Z}_{d-1}$, a $(d-1)$-cycle on $[n]$, it holds that
\[
\Fill(Z,T_n^d) ~=~ \Cone_n(F) ~\oplus~ \Fill(Z' ,\; T_{n-1}^{d})\,, 
\]
where ~$F \,=\, \Fill\left(\Link_n(Z), {T_{n-1}^{d-1}}\right)$ and $Z' = Z \oplus \Cone_n \Link_n(Z) \oplus F$.
\end{claim}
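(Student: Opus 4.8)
The strategy is to verify the claimed identity directly by checking two things: (i) that the $d$-chain $C := \Cone_n(F) \oplus \Fill(Z', T_{n-1}^d)$ is supported on $T_n^d$, and (ii) that $\boundary C = Z$; uniqueness of the filling then finishes the argument. For (i), recall $T_n^d = T_{n-1}^d \oplus \Cone_n(T_{n-1}^{d-1})$, and these two parts are disjoint (one has no simplex containing $n$, the other has every simplex containing $n$). Since $F = \Fill(\Link_n(Z), T_{n-1}^{d-1})$ is by definition supported on $T_{n-1}^{d-1}$, the chain $\Cone_n(F)$ is supported on $\Cone_n(T_{n-1}^{d-1})$; and $\Fill(Z', T_{n-1}^d)$ is by definition supported on $T_{n-1}^d$ — provided $Z'$ is genuinely a $(d-1)$-cycle on $[n-1]$, i.e.\ a $(d-1)$-cycle with no simplex containing $n$. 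This is the first thing I would check: $Z' = Z \oplus \Cone_n\Link_n(Z) \oplus F$; the simplices of $Z'$ containing $n$ are exactly those of $Z$ containing $n$, namely $\Cone_n\Link_n(Z)$, plus those of $\Cone_n(F)$; but $F$ has the same boundary as $\Link_n(Z)$, wait — I need $\Cone_n(F)$ to contribute nothing containing $n$. Re-examining: $F$ is a $d{-}1$ chain on $[n-1]$, so $\Cone_n(F)$ \emph{does} consist of $d$-simplices containing $n$. Hence the simplices of $Z'$ containing $n$ form $\Cone_n\Link_n(Z) \oplus \Cone_n(F) = \Cone_n(\Link_n(Z) \oplus F)$, and since $\boundary F = \Link_n(Z)$ as $(d{-}1)$-chains we do \emph{not} get cancellation at the level of supports unless $\Link_n(Z) = F$. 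So the correct reading must be that the cone term is absorbed differently; I would instead directly compute $\Link_n(Z')$ and show it is $0$, using $\Link_n(\Cone_n(W)) = W$ and $\Link_n(Z) \oplus \Link_n(\Cone_n\Link_n(Z)) \oplus \Link_n(\Cone_n(F)) = \Link_n(Z) \oplus \Link_n(Z) \oplus F = F$ — so I actually need $F$ to be chosen as a $(d{-}1)$-\emph{chain} whose link-free part cancels; most cleanly, one shows $Z'$ has empty link at $n$ by a direct symmetric-difference count, which forces $F$ in the definition to be the correction term. I would lay this out carefully as the first lemma-step.

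\textbf{Main computation.} Granting $Z'$ is a cycle on $[n-1]$ (the support part above should work out once one is careful that $\Cone_n(F)$ and $\Cone_n\Link_n(Z)$ together with the $n$-containing part of $Z$ cancel in the three-term sum — this is precisely the definition of $Z'$ being engineered so that $\Link_n(Z') = 0$), the heart is to compute $\boundary C$. Using Equation~(\ref{eqnBoundaryLift}), $\boundary \Cone_n(F) = F \oplus \Cone_n(\boundary F) = F \oplus \Cone_n(\Link_n(Z))$. And $\boundary \Fill(Z', T_{n-1}^d) = Z'$ by definition of filling. Therefore
\[
\boundary C = F \oplus \Cone_n(\Link_n(Z)) \oplus Z' = F \oplus \Cone_n(\Link_n(Z)) \oplus Z \oplus \Cone_n(\Link_n(Z)) \oplus F = Z,
\]
where the two copies of $F$ and the two copies of $\Cone_n(\Link_n(Z))$ cancel over $\F_2$. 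This is clean and is the decisive step.

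\textbf{Conclusion and obstacle.} Combining: $C$ is a $d$-chain supported on $T_n^d$ with $\boundary C = Z$, and since $T_n^d$ is a $d$-hypertree such a chain is unique, so $C = \Fill(Z, T_n^d)$, which is exactly the claimed identity. I expect the main obstacle to be purely bookkeeping rather than conceptual: namely, carefully justifying that $Z'$ as defined is indeed a $(d-1)$-cycle whose support avoids every simplex containing $n$, so that $\Fill(Z', T_{n-1}^d)$ is well-defined on $[n-1]$. This reduces to showing $\Link_n(Z') = 0$ and $\boundary Z' = 0$; the former by a direct symmetric-difference computation with $\Link_n \circ \Cone_n = \mathrm{id}$ and $\boundary F = \Link_n(Z)$ (noting $\boundary Z = 0$ so $\boundary \Link_n(Z) = \Link_n(\boundary Z) = 0$, legitimizing the filling $F$), the latter since $Z$, $\Cone_n(\Link_n(Z))$ (whose boundary is computed via~(\ref{eqnBoundaryLift})), and $F$ all contribute boundary terms that telescope. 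Once these well-definedness checks are in place, the proof is the three-line symmetric-difference cancellation above.
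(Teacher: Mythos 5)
Your overall approach --- verify that $C := \Cone_n(F) \oplus \Fill(Z', T_{n-1}^{d})$ is supported on $T_n^d$ and has boundary $Z$, then invoke uniqueness --- is the same as the paper's, and your ``Main computation'' reproduces the paper's decisive boundary calculation exactly. The problem is the support-verification passage, where you get tangled up and never actually untangle yourself. You write that the simplices of $Z'$ containing $n$ include ``those of $\Cone_n(F)$,'' and later compute $\Link_n(Z') = \Link_n(Z) \oplus \Link_n(\Cone_n\Link_n(Z)) \oplus \Link_n(\Cone_n(F)) = F$. But $Z' = Z \oplus \Cone_n\Link_n(Z) \oplus F$, not $Z \oplus \Cone_n\Link_n(Z) \oplus \Cone_n(F)$: the third summand is $F$ itself, a $(d-1)$-chain on $[n-1]$ with no simplex through $n$, so $\Link_n(F) = 0$. (A dimension check would have caught this: $\Cone_n(F)$ consists of $d$-simplices and cannot be a summand of the $(d-1)$-chain $Z'$.) The correct computation is therefore
\[
\Link_n(Z') \;=\; \Link_n(Z) \,\oplus\, \Link_n(\Cone_n\Link_n(Z)) \,\oplus\, \Link_n(F) \;=\; \Link_n(Z) \oplus \Link_n(Z) \oplus 0 \;=\; 0,
\]
which is what you wanted; and $\boundary Z' = 0 \oplus \Link_n(Z) \oplus \Link_n(Z) = 0$, using $\boundary \Cone_n\Link_n(Z) = \Link_n(Z) \oplus \Cone_n(\boundary\Link_n(Z))$ and $\boundary\Link_n(Z) = 0$. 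With that correction, $Z'$ is a $(d-1)$-cycle on $[n-1]$, $\Fill(Z', T_{n-1}^{d})$ is well-defined and lies inside $T_{n-1}^{d} \subset T_n^d$, and the rest of your argument is airtight. The paper's proof is terser: it simply observes that each summand of the right-hand side is a subset of $T_n^d$ and computes $\boundary$, leaving the $\Link_n(Z') = 0$ check implicit. Your instinct to spell out that check was sound; you just carried the wrong expression for $Z'$ into the computation and then, noticing something was off, hedged (``so the correct reading must be that the cone term is absorbed differently'') instead of re-reading the definition.
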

\begin{proof} Observe that since $\Link_n(Z)$ is a $(d-1)$-cycle on $[n-1]$,
$F$ is well defined. Observe also that since $F$ is a subset of $T_{n-1}^{d-1}$,
and $T_{n-1}^{d}$ is a subset of $T_n^d$, the right-hand-side of the definition 
is a subset of $T_n^d$. Thus, due to acyclicity of $T_n^d$, it suffices to show that 
the boundary of the right-hand-side is $Z$. 

By (\ref{eqnBoundaryLift}) and the definition of the $\Fill$ operator, 
\[
\boundary \Cone_n(F) ~=~ F \oplus \Cone_n(\boundary F) ~=~ 
F \oplus \Cone_n \Link_n(Z) \,,
\]
and 
\[
\boundary\left( \Fill(Z', T_{n-1}^{d})\right) 
~=~ Z' ~=~ Z \oplus \Cone_n \Link_n(Z)  \oplus F\,.
\]
The $\oplus$ of the two terms is indeed $Z$.
\end{proof}

\section{The boundaries of hypertrees}
\label{sectionBoundary}


The central result of this section is the following characterization of the
boundaries of $2$-hypertrees:

\begin{theorem}
\label{corollary2DGivenBoundary}
A nontrivial $1$-cycle $Z$ on $[n]$ is the boundary of a $2$-hypertree on $[n]$
if and only if ~$\size{Z} \equiv {n-1 \choose 2} \mod 2$.
\end{theorem}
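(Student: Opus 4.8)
The plan is to prove the two directions of the biconditional separately, with the necessity direction being a short parity computation and the sufficiency direction being the substantial part, handled via the conical extension machinery developed in the Preliminaries.

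For necessity, suppose $Z = \boundary T$ for a $2$-hypertree $T$ on $[n]$. Since $T$ has $\binom{n-1}{2}$ triangles, and working over $\F_2$ the boundary $\boundary T$ is the set of $1$-faces (edges) of odd degree in $T$, I would count incidences between triangles and edges modulo $2$: each triangle contributes $3$ edges, so $\sum_{e} \deg_T(e) = 3\binom{n-1}{2} \equiv \binom{n-1}{2} \pmod 2$. On the other hand this sum is congruent mod $2$ to the number of odd-degree edges, which is exactly $\size{\boundary T} = \size{Z}$. Hence $\size{Z} \equiv \binom{n-1}{2} \pmod 2$, as required. (Nontriviality of $Z$ is a separate easy point: $\boundary T = 0$ would make $T$ itself a $2$-cycle, contradicting acyclicity, so in fact the parity condition forces $\binom{n-1}{2}$ odd anyway, but one should check the edge cases cleanly.)

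For sufficiency, the natural approach is induction on $n$ using nice trees, i.e.\ the conical extension $H_{n} = \Ext_n(H_{n-1}, T_{n-1})$ where $H_{n-1}$ is a $2$-hypertree on $[n-1]$ and $T_{n-1}$ is a $1$-hypertree (an ordinary spanning tree) on $[n-1]$. By Equation~(\ref{eqnBoundaryLift}) and the definition of $\Ext$, one computes
\[
\boundary H_n ~=~ \boundary H_{n-1} ~\oplus~ \boundary \Cone_n(T_{n-1}) ~=~ \boundary H_{n-1} ~\oplus~ T_{n-1} ~\oplus~ \Cone_n(\boundary T_{n-1}),
\]
so the boundary of the extension is controlled by the choice of spanning tree $T_{n-1}$, its edge set, and the link at $n$ which is $\boundary T_{n-1}$ (the set of odd-degree vertices of the tree). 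The idea is: given a target $1$-cycle $Z$ on $[n]$ with the correct parity, split it as $Z = Z' \oplus \Cone_n(L)$ where $L = \Link_n(Z)$ is a $0$-cycle on $[n-1]$ (an even-size set of vertices) and $Z'$ is an $n$-free $1$-cycle, i.e.\ a $1$-cycle on $[n-1]$. I want to realize $Z$ as $\boundary H_n$ for a suitable nice tree, which by the displayed formula amounts to choosing a spanning tree $T_{n-1}$ whose odd-degree vertex set is exactly $L$ (always possible: any even subset of vertices is the odd-degree set of some spanning tree — e.g.\ take a Hamiltonian path and flip), and then choosing $H_{n-1}$ so that $\boundary H_{n-1} = Z' \oplus T_{n-1}$; this is exactly an instance of the inductive hypothesis, provided the parity of $\size{Z' \oplus T_{n-1}} = \size{Z'} \oplus \size{T_{n-1}} \bmod 2 = \size{Z'} + (n-2) \bmod 2$ matches $\binom{n-2}{2} \bmod 2$. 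One then verifies this parity bookkeeping closes: $\size{Z} \equiv \size{Z'} + \size{L} \pmod 2$, $\size{L}$ is even, $\binom{n-1}{2} - \binom{n-2}{2} = n-2$, and $3\binom{n-1}{2} \equiv \binom{n-1}{2}$, so everything lines up. The base case is some small fixed $n$ (say $n = 3$), handled by inspection.

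The main obstacle I expect is the parity/counting bookkeeping in the inductive step — making sure the decomposition $Z = Z' \oplus \Cone_n(L)$ with $Z'$ a cycle on $[n-1]$ is always available, that the required spanning tree $T_{n-1}$ with prescribed odd-degree vertex set $L$ exists for every even $L$, and above all that the induced target $\boundary H_{n-1} = Z' \oplus T_{n-1}$ satisfies the hypothesis's parity congruence so the induction actually applies. A secondary subtlety is handling the degenerate cases (e.g.\ $Z' = 0$, or small $n$ where $\binom{n-1}{2}$ is odd versus even, or when $L$ is empty so no coning at vertex $n$ is needed) without the argument breaking; but these should all be routine once the main step is set up. I would also double-check that the resulting $H_n$ is genuinely a $2$-hypertree (acyclicity and size $\binom{n-1}{2}$) — but this is guaranteed automatically by the properties of $\Ext$ recorded after Definition~\ref{def:nice}.
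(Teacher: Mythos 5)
Your necessity direction is identical to the paper's. For sufficiency, the paper does not argue directly for $d=2$: it first proves the considerably more general Theorem~\ref{theoremLargeForestGivenBoundary}, which produces (by a double induction on $(d,n)$) a collapsible $d$-forest $F$ with $\boundary F = Z$ and $\cora(F) \leq \binom{n-1}{d-2}$, and only then deduces Theorem~\ref{corollary2DGivenBoundary} by noting that for $d=2$ the co-rank is $0$ or $1$, with the parity of $\size{Z}$ deciding which. Your inductive step is in fact the same computation as the paper's, specialized to $d=2$ --- relabel so $n \in \supp(Z)$, set $L = \Link_n(Z)$, pick a spanning tree $Y$ on $[n-1]$ with $\boundary Y = L$, and recurse on the $1$-cycle $(Z \oplus \Cone_n L) \oplus Y$ --- but the framing differs: you insist on maintaining co-rank $0$ throughout, while the paper allows the co-rank to drift up to $1$ and invokes parity only once at the end. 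That extra slack is precisely what lets the paper dispatch the edge cases cleanly, and it is those edge cases where your write-up has gaps.

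Concretely: (i) Your claim that $Z' = Z \oplus \Cone_n(L)$ is a $1$-cycle on $[n-1]$ is false; $\boundary Z' = L$, which is nonempty whenever $n \in \supp(Z)$. This does not derail the argument, since what must be a cycle is $Z' \oplus T_{n-1}$, and $\boundary(Z' \oplus T_{n-1}) = L \oplus L = 0$, but the intermediate statement should be dropped. (ii) Your assertion that every even vertex set is the odd-degree set of some spanning tree fails for $L = \emptyset$ (a tree has at least two leaves), and the remedy you gesture at --- ``no coning at $n$'' --- is not available: a conical extension by a spanning tree $Y$ always contributes $Y \oplus \Cone_n(\boundary Y)$ to the boundary, so you cannot simply skip it. The correct fix is the one the paper uses: relabel so that $n$ already lies in $\supp(Z)$, guaranteeing $L \neq \emptyset$. (iii) You must also ensure $Z' \oplus T_{n-1} \neq 0$, i.e.\ $T_{n-1} \neq Z'$. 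For $n=4$ there is a unique spanning tree on $[3]$ with a prescribed odd-degree pair, and if it coincides with $Z'$ your recursion stalls; one must then check that the parity hypothesis excludes this coincidence. The paper avoids the verification by allowing co-rank $1$ at the $n=2d$ step (setting $F^d_{n-1}=\emptyset$ when the cycle becomes trivial) and relying on the ``at least $n-2d$ different forests'' count elsewhere. So your route is viable and has the same engine as the paper's, but these three points need genuine repair before it closes.
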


Observe that the trivial cycle cannot be the boundary of an acyclic
set. The theorem will be proved by first establishing the following much more general (but less precise) result.

Given an acyclic set $F$ of $d$-simplices on $[n]$, henceforth to be called
a {\em $d$-forest}, let $\cora(F) = {n-1 \choose {d}} - |F|$. In particular, if $F$
is a $d$-hypertree, then $\cora(F) = 0$. 

We shall use the convention that ${n \choose k} = 0$ for $k<0$.

\begin{theorem}
\label{theoremLargeForestGivenBoundary}
For every $d\geq 1$, and every nontrivial $(d-1)$-cycle $Z_n^{d-1}$ over $[n]$, there exists a collapsible $d$-forest $F_n^d$ on $[n]$ such that $\boundary F_n^d =  Z_n^{d-1}$, and 
$\cora(F_n^d) \leq {n-1 \choose d-2}$, which is typically much smaller than ${n-1 \choose d}$.~
Moreover, there are at least $n-2d$ different such $F_n^d$'s.
\end{theorem}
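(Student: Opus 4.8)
The plan is to induct on $n$, using the conical extension as the main engine. For the base case, we need some smallest value of $n$ (around $n = d+1$ or slightly larger) for which every nontrivial $(d-1)$-cycle $Z$ on $[n]$ is the boundary of a collapsible $d$-forest with co-rank at most ${n-1 \choose d-2}$; when $n$ is this small the quantity ${n-1 \choose d-2}$ is large enough (or the cycle space is small enough) that one can simply fill $Z$ inside the full complex $K_n^d$, or inside a $d$-hypertree, and check collapsibility directly. For the inductive step, given a nontrivial $(d-1)$-cycle $Z = Z_n^{d-1}$ on $[n]$, I would consider its link $\Link_n(Z)$, which by the remark following the Link definition is a $(d-2)$-cycle on $[n-1]$, and the ``deletion'' $Z \oplus \Cone_n(\Link_n(Z))$, which is a $(d-1)$-chain supported on $[n-1]$ and is in fact a $(d-1)$-cycle there (its boundary is $\boundary Z \oplus \boundary\Cone_n(\Link_n Z) = 0$ using \eqref{eqnBoundaryLift} and $\boundary Z = 0$). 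Call this cycle $Z_{n-1}^{d-1}$.

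The idea is then to fill the two pieces separately. If $\Link_n(Z)$ is nontrivial, apply the inductive hypothesis in dimension $d-1$ on $[n-1]$ to get a collapsible $(d-1)$-forest $G$ on $[n-1]$ with $\boundary G = \Link_n(Z)$ and $\cora(G) \le {n-2 \choose d-3}$; if $Z_{n-1}^{d-1}$ is nontrivial, apply the inductive hypothesis in dimension $d$ on $[n-1]$ to get a collapsible $d$-forest $F'$ on $[n-1]$ with $\boundary F' = Z_{n-1}^{d-1}$ and $\cora(F') \le {n-2 \choose d-2}$. Now set
\[
F_n^d ~=~ F' ~\oplus~ \Cone_n(G)~,
\]
which is a conical extension $\Ext_n(F', G)$. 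By the properties of $\Ext_v$ recorded in the excerpt, since $F'$ and $G$ are both acyclic and both collapsible, $F_n^d$ is acyclic and collapsible. Its boundary is $\boundary F' \oplus \boundary \Cone_n(G) = Z_{n-1}^{d-1} \oplus G \oplus \Cone_n(\Link_n Z)$; but $G$ and $\Cone_n(G)$ have disjoint support, so I need to be slightly careful — the correct choice is to use the boundary formula so that the pieces reassemble exactly to $Z$. Concretely $\boundary\Cone_n(G) = G \oplus \Cone_n(\boundary G) = G \oplus \Cone_n(\Link_n Z)$, so $\boundary F_n^d = Z_{n-1}^{d-1} \oplus G \oplus \Cone_n(\Link_n Z)$; this equals $Z$ precisely when $Z_{n-1}^{d-1}$ is taken to be $Z \oplus \Cone_n(\Link_n Z) \oplus G$ rather than just $Z \oplus \Cone_n(\Link_n Z)$ — exactly the $Z'$ bookkeeping from Claim~\ref{cl:captrick}. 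So in the inductive step I fill $\Link_n(Z)$ first to obtain $G$, then set $Z_{n-1}^{d-1} = Z \oplus \Cone_n(\Link_n Z) \oplus G$ (a $(d-1)$-cycle on $[n-1]$), fill it to obtain $F'$, and combine. For the co-rank: $|F_n^d| = |F'| + |G|$, so $\cora(F_n^d) = \cora(F') + \cora(G) + \big[{n-1 \choose d} - {n-2 \choose d} - {n-2 \choose d-1}\big] = \cora(F') + \cora(G)$ since the bracket is $0$ by Pascal. Hence $\cora(F_n^d) \le {n-2 \choose d-2} + {n-2 \choose d-3} = {n-1 \choose d-2}$, again by Pascal — exactly the claimed bound.

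There are two degenerate subcases to handle: $\Link_n(Z)$ trivial (then take $G = \emptyset$, $Z_{n-1}^{d-1} = Z$ which is then a cycle on $[n-1]$, and recurse in dimension $d$), and $Z_{n-1}^{d-1}$ trivial (then take $F' = \emptyset$ and $F_n^d = \Cone_n(G)$; note $Z$ nontrivial forces $\Link_n(Z)$ nontrivial here, and $\cora$ of $\Cone_n(G)$ equals $\cora(G) \le {n-2 \choose d-3} \le {n-1 \choose d-2}$). For the ``at least $n - 2d$ different $F_n^d$'s'' clause, the natural approach is to iterate the conical construction starting from different vertices, or to observe that at each level of the recursion where the relevant cycle is nontrivial the inductive hypothesis already furnishes several distinct forests and these propagate up through the (injective) map $F' \mapsto F' \oplus \Cone_n(G)$; tracking that the number of choices degrades by at most one per dimension step gives the linear-in-$n$ count. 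The main obstacle I anticipate is precisely this last multiplicity bound — getting a clean lower bound of the form $n - 2d$ requires care about when the recursion ``bottoms out'' at a trivial cycle (which kills the freedom) versus staying nontrivial, and about whether distinct lower-dimensional forests really produce distinct $d$-forests after extension; the acyclicity-forces-uniqueness-of-fill principle should make the latter injectivity routine, but the former requires a careful choice of which vertex to cone off at each step so that nontriviality is preserved for as long as possible.
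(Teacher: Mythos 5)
Your approach is the same as the paper's: a double induction on $(d,n)$ driven by the conical extension $\Ext_n(F',G)$, where $G$ fills $\Link_n(Z)$ in dimension $d-1$ on $[n-1]$, $F'$ fills the corrected cycle $Z' = Z \oplus \Cone_n(\Link_n Z) \oplus G$ in dimension $d$ on $[n-1]$, and the co-rank telescopes by Pascal's rule exactly as you compute.

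The genuine gap is the multiplicity clause. You flag the ``at least $n-2d$ distinct forests'' count as an anticipated obstacle but never resolve it, so the theorem's last sentence remains unproved. The paper's resolution is precisely the bookkeeping you guess at: relabel so that $n \in Z$ (this forces $\Link_n(Z)$ nontrivial, eliminating one of your degenerate cases at the outset); the inductive hypothesis in dimension $d-1$ on $[n-1]$ produces at least $(n-1)-2(d-1) = n-2d+1$ distinct admissible $G$'s; at most one of these equals $Z \oplus \Cone_n(\Link_n Z)$, the single bad choice that would make $Z'$ trivial; so at least $n-2d$ good $G$'s remain, and each yields a distinct $F_n^d$ since $\Cone_n(G)$ is a disjoint summand. (The case $n=2d$, where this count is zero and one may be forced into the bad $G$, is handled separately with $F'=\emptyset$.) Two further slips: in your subcase $Z'=\emptyset$ you assert $\cora(\Cone_n G) = \cora(G)$, which is false — one is ${n-1 \choose d}-|G|$, the other ${n-2 \choose d-1}-|G|$ — and the corrected computation shows the co-rank bound survives only when $n \le 2d$, which is exactly when the paper invokes this case. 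And your base case is hand-waved; the paper's clean observation is that for $d+1 \le n \le 2d-1$ one has ${n-1 \choose d} \le {n-1 \choose d-2}$, so any collapsible filling already has acceptable co-rank and the multiplicity requirement is vacuous since $n-2d \le 0$.
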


\begin{proof}
The proof proceeds by a double induction on $d$ and $n$. 

Let us first verify the statement for $d = 1$ by an induction on $n$. It is, of
course, a simple exercise; the reason we spell it out here is that the proof of the general case will have a very similar structure.  

For $n=1$ the statement is vacuous, and for $n=2$ it is trivial. So we assume $n \geq 3$, and that the statement is true for all $k < n$.
Given a nontrivial $0$-cycle $Z^0_n \subseteq [n]$ (i.e., a nonempty set of vertices
of even size) as the required boundary, we may assume without loss of generality (by relabelling, if necessary) that  
$n \in Z^0_n$. Pick any $i \in [n-1]$ such that $Z^0_n \neq \{i,n\}$, and set $Z^0_{n-1} = Z^0_n \oplus \{i, n\}$, a nontrivial $0$-cycle on $[n-1]$.
Let $T^1_{n-1}$ be a tree on $[n-1]$ with boundary $Z^0_{n-1}$. Its existence
is ensured by the induction hypothesis. Attaching $n$ to $i$ in $T^1_{n-1}$ 
by a new edge $(i,n)$, yields the required tree $T^1_n$ with boundary $Z^0_n$. Moreover, the $(n-2)$ possible choices of $i$ yield $(n-2)$ different trees with this property. 

Having established the base case of our induction, we proceed to prove the general case
$(d,n)$, $d>1$, assuming that the statement is true for all $n$ in dimensions 
smaller than $d$, and for all $(d,k)$ with $k<n$. 

When $d+1 \leq n \leq 2d-1$, it holds that ${n-1 \choose d} \leq {n-1 \choose d-2}$, and so the co-rank of any $d$-forest on $[n]$ is at most ${n-1 \choose d-2}$. Thus, $F_n^d  = \Fill(Z_n^{d-1},T_n^d)$, where $T_n^d$ is any collapsible $d$-hypertree, is a collapsible $d$-forest satisfying the requirement $\boundary F_n^d  = Z_n^{d-1}$.
The number of such forests is obviously $\geq 1 > n - 2d$. Thus, the statement is true for 
such $n$'s.

We proceed to the induction step assuming $n \geq 2d$.  
As before, one may assume without loss of generality (by relabeling the vertices) that the vertex $n$ participates in $Z^{d-1}_n$. Define $Z^{d-2}_{n-1} = \Link_n(Z^{d-1}_n)$; it is a nontrivial $(d-2)$-cycle on $[n-1]$. Now, consider collapsible $(d-1)$-forests $Y$ on $[n-1]$ satisfying the three conditions below.

\begin{equation} \label{equationdDcondition}
	\begin{array}{rrcl}
		\textnormal{(a)} & \boundary(Y) &=& ~Z^{d-2}_{n-1}; \\
		\textnormal{(b)} & Y ~~~~&\neq& ~Z^{d-1}_n \,\oplus\, \Cone_n(Z^{d-2}_{n-1});\\
		\textnormal{(c)} & \size{Y}~~~ &\geq& ~{n-2 \choose d-1} - {n-2 \choose d-3}\,.
	\end{array}
\end{equation}

By induction hypothesis on smaller dimensions, there are at least $(n-2d+1)$ different 
$Y$'s satisfying (\ref{equationdDcondition}a) and (\ref{equationdDcondition}c). Since at most one forest can violate (\ref{equationdDcondition}b), there are at least $n-2d$ different $Y$ that satisfy all the three conditions above. 

If such $Y$'s exist, set $F_{n-1}^{d-1} = Y$ for some possible choice of $Y$. Each
such choice will result in a different final construction. Define
\begin{equation} \label{equationdDrecursion}
Z^{d-1}_{n-1} = Z^{d-1}_n \oplus \Cone_n(Z^{d-2}_{n-1}) \oplus F^{d-1}_{n-1}\,.
\end{equation} 
Observe that (\ref{equationdDcondition}a) ensures that $Z^{d-1}_{n-1}$ is a $(d-1)$-cycle on
$[n-1]$, and (\ref{equationdDcondition}b) ensures that it is nontrivial. The induction hypothesis about the case $(d,n-1)$ yields the existence of a collapsible forest $F_{n-1}^{d}$ such that $\boundary F^d_{n-1} = Z^{d-1}_{n-1}$ and $\size{F^d_{n-1}} \geq {n-2 \choose d} - {n-2 \choose d-2}$. 

If $Y$ as above does not exist, which may happen only when $n=2d$, there still exists $Y'$
satisfying (\ref{equationdDcondition}a) and (\ref{equationdDcondition}c), and in addition
$Y' = Z^{d-1}_n \,\oplus\, \Cone_n(Z^{d-2}_{n-1})$. Set $F_{n-1}^{d-1} =  Y'$.
The cycle $Z^{d-1}_{n-1}$ is still defined by (\ref{equationdDrecursion}), but now
it is trivial.  Choosing $F^d_{n-1} = \emptyset$, it still holds that $\boundary F^d_{n-1} = Z^{d-1}_{n-1}$, and that $\size{F^d_{n-1}} \geq {n-2 \choose d} - {n-2 \choose d-2} = 0$.

Having defined $F^{d-1}_{n-1}$ and $F^{d}_{n-1}$ (and now it does not matter whether $Y$ exists or not), we finally define the desired $F_d^n$\;:
\begin{equation}
\label{eq:fnd}
F^d_n  ~=~ \Ext_n(F^d_{n-1}, F^{d-1}_{n-1}) ~=~ 
F^d_{n-1} \,\oplus\, \Cone_n(F^{d-1}_{n-1})~.
\end{equation}\par
Since both $F^{d-1}_{n-1}$ and $F^{d}_{n-1}$ are collapsible, so is $F^d_n$.

Next, let us verify that $F^d_n$ has the required boundary and co-rank.
\[
\begin{array}{rcll}
	\boundary(F^d_n) 
	&=& \boundary(F^d_{n-1}) 
		\oplus F^{d-1}_{n-1} 
		\oplus \Cone_n(\boundary(F^{d-1}_{n-1}))
		& ~~~~~~~(\textnormal{by (\ref{eqnBoundaryLift}) and the linearity of $\boundary$}) \\
	&=& Z^{d-1}_{n-1} 
		\oplus F^{d-1}_{n-1} 
		\oplus \Cone_n(Z^{d-2}_{n-1})
		& ~~~~~~~(\textnormal{by definition of $F_{n-1}^{d}$ and (\ref{equationdDcondition}a})) \\
	&=& Z^{d-1}_n\,.
		&~~~~~~~ (\textnormal{by (\ref{equationdDrecursion}})) \\
\end{array}
\]

Further, we have 
\[
\begin{array}{rcl}
{n-1 \choose d} - \size{F^d_n} 
	&=& {n-1 \choose d} - \left(\size{F^d_{n-1}} + \size{F^{d-1}_{n-1}} \right)
	\\
	&=& \left( {n-2 \choose d} - \size{F^d_{n-1}} \right)
	+	\left( {n-2 \choose d-1} - \size{F^{d-1}_{n-1}} \right) 
	\\
	&\leq& {n-2 \choose d-2} + {n-2 \choose d-3}
	\\
	&=&	{n-1 \choose d-2}\,.
\end{array}
\]

Finally, it is easy to verify that each of the $n-2d$ different choices available for $F^{d-1}_{n-1}$, yields a different $F^d_n$, and so there are at least 
$n-2d$ different $F^d_n$ with the required properties. 
\end{proof}

It is now an easy matter to prove Theorem~\ref{corollary2DGivenBoundary}.

\begin{proof}~{\em (of Theorem~\ref{corollary2DGivenBoundary})}~
By Theorem~\ref{theoremLargeForestGivenBoundary} applied to $d=2$,
the size of the constructed $F_n^2$ with $\boundary F_n^2 = Z_n^1$ is
either ${n-1 \choose 2}$,  or ${n-1 \choose 2}-1$. However, since the boundary of every $2$-simplex consists of 3 edges, $|F_n^2|$ and $|\boundary F_n^2|$ must
have the same parity. Thus, $|F_n^2| = {n-1 \choose 2}$, i.e., it is a 2-hypertree,
if and only if $|Z_n^1|$ has the same parity as ${n-1 \choose 2}$.
\end{proof}

We conclude this section with the following easy corollary of 
Theorem~\ref{theoremLargeForestGivenBoundary} about the density
of boundaries of $d$-hypertrees among the $d$-cycles:

\begin{corollary}
\label{th:easy}
For every $(d-1)$-cycle $Z$ on $[n]$, there exists a $(d-1)$-cycle $Z'$ on
$[n]$, such that $Z'$ is the boundary of a $d$-hypertree on $[n] $, and 
$|Z \oplus Z'| \leq (d+1) \cdot {n -1 \choose d-2}$.
\end{corollary}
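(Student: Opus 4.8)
The plan is to derive Corollary~\ref{th:easy} directly from Theorem~\ref{theoremLargeForestGivenBoundary} by the obvious strategy: given the target cycle $Z$, use the theorem to produce a collapsible $d$-forest $F$ with $\boundary F = Z$ and co-rank at most ${n-1 \choose d-2}$, and then enlarge $F$ to a $d$-hypertree $T$ by adding $d$-simplices one at a time, controlling how much the boundary moves with each addition. The boundary of the hypertree $T$ obtained this way will be the desired $Z'$, and the point is that each added $d$-simplex changes the boundary by at most $d+1$ edges (in the $(d-1)$-chain sense), since $\boundary\sigma$ has exactly $d+1$ faces of dimension $d-1$.

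First I would invoke Theorem~\ref{theoremLargeForestGivenBoundary} to get the collapsible forest $F = F_n^d$ with $\boundary F = Z$ and $\cora(F) = {n-1 \choose d} - |F| \le {n-1 \choose d-2}$. Then I would extend $F$ to a maximal $d$-forest, i.e.\ a $d$-hypertree $T$ on $[n]$, by greedily adding $d$-simplices $\sigma_1, \ldots, \sigma_m$ with $m = \cora(F) \le {n-1 \choose d-2}$, each kept acyclic at every step (possible precisely because $F$ is not yet a hypertree until $m$ faces have been added, by the size characterization of hypertrees). Set $Z' = \boundary T = \boundary F \oplus \boundary\sigma_1 \oplus \cdots \oplus \boundary\sigma_m$, using linearity of $\boundary$ over $\F_2$. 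Since $Z'$ is the boundary of the hypertree $T$, it has the required form, and $Z \oplus Z' = \boundary\sigma_1 \oplus \cdots \oplus \boundary\sigma_m$, whose support has size at most $\sum_{i=1}^m |\boundary\sigma_i| = (d+1)\,m \le (d+1)\cdot{n-1 \choose d-2}$ by the triangle inequality for symmetric difference.

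There is essentially no hard part here; the only things to double-check are bookkeeping ones. One should confirm that the greedy extension of an acyclic set to a maximal acyclic set indeed terminates with exactly $\cora(F)$ additions — this is immediate from the fact (stated in the preliminaries) that every $d$-hypertree on $[n]$ has size ${n-1 \choose d}$, together with maximality. One should also note $Z'$ is automatically nontrivial, being the boundary of a hypertree, so the statement is not vacuous. A minor subtlety is whether we need $F$ collapsible at all — we do not; plain acyclicity suffices for the extension argument, so the collapsibility in Theorem~\ref{theoremLargeForestGivenBoundary} is simply not used here. Finally, the bound $(d+1)\cdot{n-1 \choose d-2}$ follows because each $\boundary\sigma_i$ contributes at most $d+1$ new edges to the symmetric difference, and there are at most ${n-1 \choose d-2}$ of them.
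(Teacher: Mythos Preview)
Your argument is correct and is exactly the derivation the paper has in mind (the corollary is stated there without proof, merely as an ``easy corollary'' of Theorem~\ref{theoremLargeForestGivenBoundary}). One small caveat worth making explicit: Theorem~\ref{theoremLargeForestGivenBoundary} applies only to \emph{nontrivial} $Z$, so your proof covers only that case --- but this is also an implicit hypothesis of the corollary itself, since for $d=1$ and $Z=\emptyset$ the claimed bound reads $|Z'|\le 2\cdot{n-1\choose -1}=0$, which no spanning-tree boundary can meet.
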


\section{\boldmath{On the maximum size of a simple $d$-cycle on $[n]$}}
\label{sectionLargestSimpleCycle}

Observe that if the boundary of a $d$-forest $F_n^d$ is of the form 
$\boundary \sigma$ for some $d$-simplex $\sigma$ on $[n]$, then 
$F_n^d \oplus \sigma$ is a simple cycle. Therefore, applying Theorem~\ref{theoremLargeForestGivenBoundary} to $(d-1)$-cycles of the
form $\boundary \sigma$, we conclude that:

\begin{theorem}
\label{th:hamiltonian}$\mbox{}$
\begin{itemize}
\item Hamiltonian $2$-cycles on $[n]$ exist over $\F_2$ if and only if $n \equiv 0 \textnormal{ or } 3 \mod 4$.
\item For general $d$, the maximum size of a simple $d$-cycle on $[n]$ is at least 
${n-1 \choose d} - {n-1 \choose d-2} +1$.   
\end{itemize}
\end{theorem}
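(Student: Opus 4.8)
The plan is to derive Theorem~\ref{th:hamiltonian} directly from Theorem~\ref{theoremLargeForestGivenBoundary}, exactly as the paragraph preceding it suggests. First I would recall the elementary observation stated just above: if $F$ is a $d$-forest on $[n]$ whose boundary equals $\boundary\sigma$ for some $d$-simplex $\sigma$, then $\sigma\notin F$ (since $F$ is acyclic and $\boundary(F\oplus\sigma)=0$ would otherwise force the nontrivial cycle $F\oplus\sigma$ to be supported inside $F$ — wait, more carefully: $\boundary\sigma\ne 0$ forces $\sigma\notin\supp(F\oplus\sigma)$ to fail, i.e. $\sigma\notin F$), and $F\oplus\sigma$ is a $d$-cycle. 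Its simplicity follows because $F$ is acyclic, so no proper nonempty subset of $F\oplus\sigma$ can support a nontrivial cycle: any such subset avoiding $\sigma$ lies in $F$ (impossible), and any such proper subset containing $\sigma$ would, upon adding it to $F\oplus\sigma$ — hmm, the cleanest argument is that $F$ being a forest means $\{\boundary\tau:\tau\in F\}$ is linearly independent, so the only dependency among $\{\boundary\tau:\tau\in F\cup\{\sigma\}\}$ is the full one, giving simplicity. So $|F\oplus\sigma| = |F|+1$.

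Next, for the second bullet, I would simply apply Theorem~\ref{theoremLargeForestGivenBoundary} with the $(d-1)$-cycle $Z_n^{d-1} = \boundary\sigma$ for an arbitrary $d$-simplex $\sigma$ on $[n]$ (this is a nontrivial $(d-1)$-cycle for $n\ge d+1$). The theorem produces a collapsible $d$-forest $F_n^d$ with $\boundary F_n^d = \boundary\sigma$ and $\cora(F_n^d)\le\binom{n-1}{d-2}$, i.e. $|F_n^d|\ge\binom{n-1}{d}-\binom{n-1}{d-2}$. Then $F_n^d\oplus\sigma$ is a simple $d$-cycle of size at least $\binom{n-1}{d}-\binom{n-1}{d-2}+1$, which is the claimed bound.

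For the first bullet ($d=2$), the upper direction is Theorem~\ref{corollary2DGivenBoundary} combined with the parity remark already made in its proof: a Hamiltonian $2$-cycle on $[n]$ has the form $T\oplus\sigma$ where $T$ is a $2$-hypertree with $\boundary T=\boundary\sigma$; by Theorem~\ref{corollary2DGivenBoundary} such a $T$ exists iff $|\boundary\sigma|\equiv\binom{n-1}{2}\bmod 2$, and since $|\boundary\sigma|=3$ is odd, this says $\binom{n-1}{2}$ must be odd. A quick check of $\binom{n-1}{2}\bmod 2$ against $n\bmod 4$ shows $\binom{n-1}{2}$ is odd exactly when $n\equiv 0$ or $3\pmod 4$. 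For the existence direction, I would invoke Theorem~\ref{theoremLargeForestGivenBoundary} at $d=2$ with $Z_n^1=\boundary\sigma$: it yields a $2$-forest $F_n^2$ with $\boundary F_n^2=\boundary\sigma$ and $\cora(F_n^2)\le\binom{n-1}{0}=1$, so $|F_n^2|$ is either $\binom{n-1}{2}$ or $\binom{n-1}{2}-1$; the parity constraint $|F_n^2|\equiv|\boundary F_n^2|=3\equiv 1\pmod 2$ forces $|F_n^2|=\binom{n-1}{2}$ precisely when $\binom{n-1}{2}$ is odd, i.e. when $n\equiv 0,3\pmod 4$, and in that case $F_n^2$ is a $2$-hypertree, so $F_n^2\oplus\sigma$ is the desired Hamiltonian $2$-cycle.

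I do not anticipate a genuine obstacle here, since all the heavy lifting is in Theorem~\ref{theoremLargeForestGivenBoundary}; the only point requiring a little care is the simplicity of $F_n^d\oplus\sigma$, which I would justify via the linear independence of $\{\boundary\tau:\tau\in F_n^d\}$, and the clean verification of the congruence $\binom{n-1}{2}\equiv 1\pmod 2 \iff n\equiv 0,3\pmod 4$.
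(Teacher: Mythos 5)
Your proposal is correct and takes exactly the route the paper indicates: the paper's entire ``proof'' is the one-sentence observation preceding the theorem (that $F_n^d\oplus\sigma$ is a simple $d$-cycle when $\boundary F_n^d=\boundary\sigma$, combined with Theorem~\ref{theoremLargeForestGivenBoundary} applied to $Z_n^{d-1}=\boundary\sigma$), and for the $d=2$ ``iff'' it relies on Theorem~\ref{corollary2DGivenBoundary} via the same parity argument you give. You have simply fleshed out the details (simplicity via linear independence of $\{\boundary\tau:\tau\in F\}$, the verification that $\binom{n-1}{2}$ is odd iff $n\equiv 0,3\pmod 4$, and the reduction of a Hamiltonian $2$-cycle to a $2$-hypertree of the correct boundary by removing one face) that the paper leaves implicit.
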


\begin{remark}
We conclude this section by mentioning that the parity condition $\size{X} \equiv \size{\boundary(X)} \mod 2$ for even dimensions is not the only factor that restricts the size of a simple cycle. The dual of a simple cycle is a cut and when $d = n-3$, the dual of a simple $d$-cycle on $[n]$ is a graphical ($1$-dimensional) cut. Since the largest cut in a graph on $n$ vertices has size at most $n^2/4$, this is the size of a largest simple $n$-vertex $(n-3)$-cycle too. It is shown in~\cite{linial2014Shadows} that the largest $2$-dimensional cut over $\F_2$ has a size ${n-1 \choose 3} - \frac{n^2}{4} - \Theta(n)$. Hence the size of a largest simple $n$-vertex $(n-4)$-cycle over $\F_2$ is also ${n-1 \choose n-4} - \frac{n^2}{4} - \Theta(n)$. 
\end{remark}

\section{Trees with large filling-volume}
\label{sectionCapSize}

In this section our aim is to  demonstrate that, in every dimension $d$, there exist trees of average filling-volume $\Omega(n^d)$.
We construct a sequence of nice $d$-trees by conical extensions along carefully chosen relabellings of a $(d-1)$-tree of average filling-volume $\Omega(n^{d-1})$. The subtlety lies precisely in the choice and the analysis of a suitable relabelling scheme%
\footnote{%
Let us mention that, for a fixed $d$, choosing the relabellings independently and uniformly at random in each step $i > d+1$, yields a sequence $\{T_i^d\}_{i=d+2}^\infty$ of random $d$-trees, where every $T_i^d$ has the desired property with positive probability. Interestingly, more can be said in this setting. Let $Z$ be {\em any} $(d-1)$-cycle in $K_{n}^{d-1}$, and let $\tilde{T}_n^d$ be a random uniform relabeling of $T_n^d$ from this sequence. Then, the expected value of $\Fill(Z,\tilde{T}_n^d)$ is $\Omega(n^{d})$. The analysis is omitted here in favor of the explicit deterministic construction presented next.}.
 
In order to illustrate the main ideas in a more concrete setting, we describe the construction of a family of $2$-trees on $[n]$ with average filling-volume $\Omega(n^2)$. In $d = 1$, a Hamiltonian path on $[n]$ has average filling volume $\Omega(n)$.
We will show, in particular, how the problem for $2$-dimensional trees is reduced to that of constructing a special sequence of Hamiltonian paths.

We use the conical extension described in Section~\ref{sectionPreliminaries} 
multiple times to construct a $2$-tree $T^2_n$ on $[n]$, given a $2$-tree $T^2_m$ on $[m]$, $m < n$, and a sequence of $1$-trees $T^1_i$ on $[i]$, $i \in \{m, \ldots, n-1\}$, as follows:
\begin{equation} 
\label{eqnNiceSequence}
	T^2_{i+1} = \Ext_{i+1} (T^2_{i}, T^1_{i}), \; i \in \{m, \ldots, n-1\}.
\end{equation}

Specifically, we will assume that $n$ is divisible by $4$, to avoid the use of floor and ceil, and  choose $m = n/2$, and $T_m^2$ to be an arbitrary tree on $[m]$.

Our aim is to construct the sequence of $1$-dimensional (graphical) trees $T_i^1$, to ensure that for a typical $2$-simplex $\sigma = \{a,b,c\}$, $\Fill^2_n = \Fill(\boundary\sigma, T_n^2)$ will be large. For that, fixing $\sigma$, and  using repeatedly Claim~\ref{cl:captrick} we get: 

\begin{equation}
\label{eqnGeneralCap}
	\Fill^2_n = \bigcupdot_{i=n}^{m+1} \Cone_i (\Fill^1_{i-1})  \cupdot \Fill^2_m,
\end{equation}
where, going by decreasing $i$,   $Z_n$ is the (graphic) triangle  = $\{(a,b),(b,c),(c,a)\} = \boundary \sigma$,   
$\Fill^1_{i-1} = \Fill(\Link_{i} (Z_i), T_{i-1}^1)$, for 
$Z_i = Z_{i+1} + \boundary \Cone_{i+1} (\Fill^1_i)$, and
$\Fill^2_m = \Fill(Z_m, T^2_m)$. 
It may be noted that the summands in Equation~\ref{eqnGeneralCap} are a disjoint collection of $d$-simplices.

Since $\Fill^2_m$ is arbitrary, to obtain a large filling-volume, we rely solely on the sum $\sum_{i=m+1}^n |\Cone_i (\Fill^1_{i-1}) | = \sum_{i=m+1}^n |\Fill^1_{i-1}|$. Hence it is enough to understand the sequence $\Fill^1_{i-1}$, $i=n , \ldots, m+1$. 

Now, $Z_i$ is a $1$-dimensional cycle (that is, a graphic cycle). In our case, by construction, we will ensure that it will always be a simple cycle. Hence $\Link_i (Z_i)$ are the two neighbors $x,y$ of $i$, in $Z_i$ if $i \in Z_i$, and $\emptyset$ if $i \notin Z_i$. In the latter case, we gain nothing to the sum above, hence we will design $Z_i$ so that $i$ will belong to $Z_i$ for many $i$'s. 

Suppose first that $i \in Z_i$, with neighbors $x,y$ in $Z_i$. Then $\Fill(\{x,y\}, T_{i-1}^1)$ is the path in $T_{i-1}^1$ from $x$ to $y$, and in order to contribute significantly to the sum, we wish that this will be large. Obviously for $T_{i-1}^1$, being a Hamiltonian path, the ``typical" filling-volume will be as large as possible, which motivates the choice of $T^1_{i-1}$ to be a Hamiltonian path on $[i-1]$.  

Let us examine a concrete construction.  Let $A = \{1, \ldots, \frac{n}{4}\}$, $B = \{\frac{n}{4}+1, \ldots, \frac{n}{2}\}$ and $C = \{\frac{n}{2}+1, \ldots, n-1\}$, then each $T^1_i$ is a path of the form $P_AiP_BP_C$, where $P_A$, $P_B$ and $P_C$ are Hamiltonian paths on $A$, $B$ and $C \cap [i-1]$, respectively (Figure~\ref{figTwoDTree}). Most importantly, note how $P_A$ and $P_B$ change with the parity of $i$.  

\begin{figure}[t]

%
\begin{center}
\ifRecompileFigures
	\input{SocG--Full-fig-2.tex}
\else
	\includegraphics[width=\textwidth]{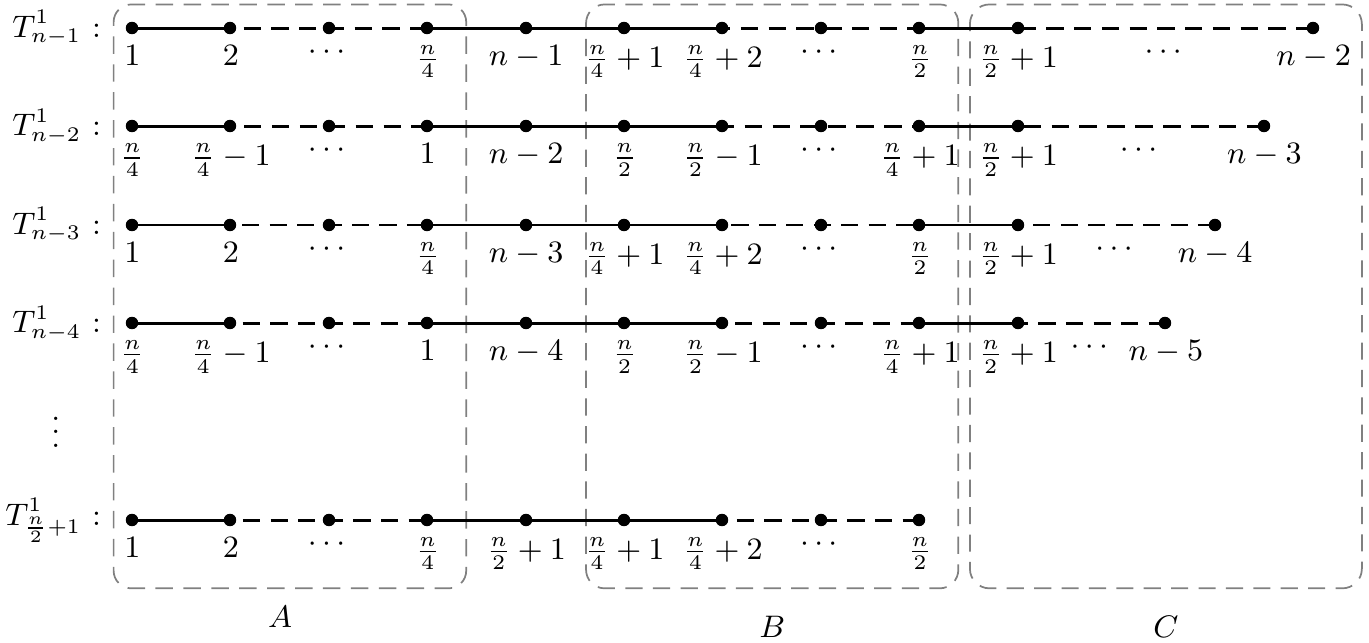}
\fi
\end{center}
\caption{The sequence of $1$-trees $\{T^1_i\}$ used to construct a $2$-tree on $n$ vertices with average filling-volume  $\Omega(n^2)$.}
\label{figTwoDTree}

\end{figure}

We call a $2$-simplex $\sigma = \{a,b,c\}$ ``good'' if $a \in A$, $b \in B$ and $c \geq n/2 + 2$. Note that for a good $\sigma = \{a,b,c\}$, if $c <n$,  $c \notin Z_n = \boundary\sigma$ and hence $Z_{n-1} = Z_n = \sigma$. Thus this carries on as long as $i > c$. For $i=c$, $\Link_i (Z_i) = \{a,b\}$, and then $Z_{i-1} = \{(a,b)\} \cup \Fill(\{a,b\}, T_i^1)$ which is, by construction, the simple cycle containing the suffix of $P_A$ from $a$ to $i-1$, then the prefix of $P_B$ from $i-1$ to $b$, plus the edge $(a,b)$. In particular, it is simple and contains $i-1$. Note also, that by the alteration between even and odd values of $i$, from $i=c$ down to $n/2+1$,  $\Link_j(Z_j)$  are $\{1,\frac{n}{2}\}$ for even $j$, and $\{\frac{n}{4}, \frac{n}{4} +1\}$ for odd $j$,  for every $\frac{n}{2} +1 < j < c$.  This is summed up in the following:

\begin{claim}
\begin{equation}
\Link_i(Z^1_i) =
	\begin{cases} 
	\begin{array}{ll}
		\emptyset, 			& i \in \{c+1, \ldots, n\}, \\
		\{a,b\},			& i = c,\\
		\{\frac{n}{4}, \frac{n}{4} + 1\}, 	& i \in \{\frac{n}{2} + 1, \ldots, c-1\}, \;i \textnormal{ odd}, 
			\textnormal{ and} \\
		\{1, \frac{n}{2}\}, 			& i \in \{\frac{n}{2} + 1, \ldots, c-1\}, \;i \textnormal{ even}.
	\end{array}
	\end{cases}
\end{equation}
Moreover,
\begin{equation}
|\Fill^1_i| = \frac{n}{2}+1, ~~ i \in \left\{\frac{n}{2}+1, \ldots, c-1 \right\}.
\end{equation}
\end{claim}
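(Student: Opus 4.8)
The plan is to track the cycles $Z_i$ explicitly down the recursion, exploiting the specific block structure $A, B, C$ and the parity-alternating form $P_A\, i\, P_B\, P_C$ of the paths $T^1_i$. The starting point is the observation already made in the text: for a good simplex $\sigma = \{a,b,c\}$ with $a \in A$, $b \in B$, $c \ge n/2+2$, the boundary triangle $\boundary\sigma$ contains none of the vertices $c+1, \dots, n$, so $Z_i = \boundary\sigma$ and $\Link_i(Z_i) = \emptyset$ for all $i \in \{c+1, \dots, n\}$; this handles the first case of the claimed formula and contributes nothing to the sum (as expected). At $i = c$ we have $c \in Z_c = \{(a,b),(b,c),(c,a)\}$ with the two $Z_c$-neighbours of $c$ being exactly $a$ and $b$, giving $\Link_c(Z_c) = \{a,b\}$, the second case.

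The heart of the argument is the downward recursion for $n/2+1 \le i < c$. Here I would argue by induction on decreasing $i$ that $Z_i$ is a \emph{simple} $1$-cycle consisting of: the edge $(a,b)$, a suffix of the $A$-path $P_A$ (for the current parity of $i$) from $a$ out to the current top vertex of $A$, a suffix/prefix of the $B$-path $P_B$ between $b$ and the bottom vertex of $B$, plus — and this is the key point — the pivot edge joining the two blocks, which by construction is either $\{1, n/2\}$ or $\{n/4, n/4+1\}$ depending on parity. Concretely: at step $i$, $\Fill^1_{i-1} = \Fill(\Link_i(Z_i), T^1_{i-1})$ is a path in the Hamiltonian path $T^1_{i-1} = P_A\,(i-1)\,P_B\,P_C$, and $Z_{i-1} = Z_i \oplus \{\Link_i(Z_i)\} \oplus \boundary\Cone_{i-1}\!\big(\ldots\big)$ — actually $Z_{i-1} = Z_i + \boundary\Cone_i(\Fill^1_i)$ in the paper's notation — which amounts to replacing two consecutive edges of $Z_i$ incident to the vertex $i$ by the $T^1_{i-1}$-path connecting their other endpoints. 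Because $P_A$ and $P_B$ flip orientation/endpoint with the parity of $i$, the pair of $Z_i$-neighbours of the "active" vertex $i$ alternates between $\{1, n/2\}$ (even $i$) and $\{n/4, n/4+1\}$ (odd $i$): one checks that when $i$ is even the vertex $i-1$ sits at a position in $T^1_{i-1}$ so that routing from the current $A$-endpoint through $i-1$ to the current $B$-endpoint forces the next link to be $\{n/4, n/4+1\}$, and symmetrically when $i$ is odd. This gives the third and fourth cases of the claimed formula.

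For the size statement, once we know $\Link_i(Z^1_i) = \{1, n/2\}$ or $\{n/4, n/4+1\}$ for $i \in \{n/2+1,\dots,c-1\}$, we simply compute $|\Fill^1_i| = |\Fill(\Link_{i+1}(Z^1_{i+1}), T^1_i)|$ as the length of the path in $T^1_i = P_A\, i\, P_B\, P_C$ between those two endpoints. Since $P_A$ spans $A$ of size $n/4$, $P_B$ spans $B$ of size $n/4$, the pivot vertex $i$ lies between them, and the path between the designated endpoints traverses essentially all of $A \cup \{i\} \cup B$, its edge-count is $n/2 + 1$ (careful with the off-by-one: $|A| + |B| = n/2$ vertices plus the vertex $i$ gives $n/2+1$ vertices, hence $n/2$ edges within $A \cup B \cup \{i\}$ — I expect the correct count to include one extra edge, namely the edge between the $A$-end and the $B$-end is \emph{not} in $P_A i P_B$ but the path through $i$ has $n/2$ internal edges; adding the closing edge $(a,b)$ is what makes $Z$ have size $n/2+1$, while $\Fill^1_i$ itself is the open path — I would recheck whether the stated $n/2+1$ counts the edge $(a,b)$ or not and adjust the endpoint bookkeeping accordingly).

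The main obstacle I anticipate is the parity bookkeeping in the induction: precisely verifying that the alternation of $P_A$ and $P_B$ with the parity of $i$ produces exactly the alternation $\{1,n/2\} \leftrightarrow \{n/4,n/4+1\}$ of the links, and that $Z_i$ stays simple throughout (no accidental cancellation of edges when forming the symmetric difference $Z_i + \boundary\Cone_i(\Fill^1_i)$). This requires pinning down the endpoints of $P_A$ and $P_B$ for each parity class and checking, at the moment the active vertex changes from $i$ to $i-1$, that the two edges removed and the path inserted splice together into a single simple cycle. A clean way to organize this is to maintain as an inductive invariant an explicit description of $Z_i$ as "edge $(a,b)$ $\cup$ (parity-dependent monotone path across $A \cup \{i\} \cup B$)" and to verify the one-step transition preserves it; the figure referenced in the text is essentially a picture of this invariant.
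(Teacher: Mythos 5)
Your proposal is correct and follows essentially the same approach as the paper, which ``proves'' this claim by the informal walkthrough in the paragraph immediately preceding it: track $Z_i$ downward, note $Z_i=\boundary\sigma$ for $i\geq c$, observe that $Z_{c-1}$ is the simple cycle formed by $(a,b)$ plus the $T^1_{c-1}$-path from $a$ to $b$, and then use the parity alternation of $P_A,P_B$ to see that the active vertex's neighbours in $Z_j$ flip between $\{1,n/2\}$ and $\{n/4,n/4+1\}$. Your inductive invariant (``edge $(a,b)$ plus a parity-dependent monotone path across $A\cup\{j\}\cup B$'') is exactly the picture the paper is conveying via the figure. Your flagged concern about the off-by-one is well taken: the path in $T^1_i$ between $\{1,n/2\}$ (for odd $i$) or $\{n/4,n/4+1\}$ (for even $i$) runs through all of $A\cup\{i\}\cup B$, which is $n/2+1$ vertices and hence $n/2$ edges, so $|\Fill^1_i|=n/2$ rather than $n/2+1$; moreover $|\Fill^1_{c-1}|=|\Fill(\{a,b\},T^1_{c-1})|$ depends on $a,b$ and need not equal this, so the clean equality should really be asserted only for $i\in\{n/2+1,\ldots,c-2\}$. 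Neither imprecision affects the $\Omega(n^2)$ bound that follows, since the subsequent estimate only uses $(c-n/2-2)\cdot n/2$ with a $\gtrapprox$, which is consistent with either count.
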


Hence, equation \ref{eqnGeneralCap} implies that for a good $\sigma$,
$|\Fill(\{a,b,c\}, T^2_n)| \gtrapprox (c - \frac{n}{2} -2)\frac{n}{2}$. 

For any $c \geq n/2 + 2$, the number of good triangles containing $c$ is $|A||B| = (\frac{n}{4})(\frac{n}{4})$. Hence 

\begin{equation}
\begin{array}{rcl}
\sum_{\sigma ~good}  |\Fill(\sigma, T^2_n)| 
	&\gtrapprox& 	\sum_{c = n/2+3}^{n} 
		\frac{n^2}{16}
		\left(c-\frac{n}{2}-2\right) 
		\frac{n}{2}\\
	&\gtrapprox& n^5/2^9.
\end{array}
\end{equation} 
Since the total number of $2$-faces is less than $n^3$, we see that $\mu(T^2_n) \in \Omega(n^2)$. 

After introducing one more notation, let us briefly discuss the essential features in the construction of $T^2_n$ that resulted in a large average filling-volume.  Given a tree $T$ in $K^d_n$ and a $d$-simplex $\tau \in T$,  the set of all $d$-simplices in $K^d_n$ such that $T \oplus \tau \oplus \sigma$ is a tree,  is called the {\em cut of $\tau$ in $T$}, denoted by $\Cut(\tau, T)$.  

We constructed $T^2_n$ by repeated conical extensions of an arbitrary $2$-tree $T^2_{\frac{n}{2} + 1}$ on $[\frac{n}{2}+1]$ using the sequence of $1$-trees depicted in Figure~\ref{figTwoDTree}. It should be clear that we only used the properties of this sequence of $1$-trees to demonstrate that the average filling-volume of $T^2_n$ is $\Omega(n^2)$. This will be our approach in higher dimensions too. We start with an arbitrary $d$-tree on $[m]$ where this time $m = \ceil{(1 - 1/d)n}$ and then extend it to a $d$-tree on $[n]$ along a sequence of carefully chosen $(d-1)$-trees $T^{d-1}_i$, $i \in \{m, \ldots, n-1\}$. 

If we look at the initial $\frac{n}{2}$-length segment of each of the $1$-trees in Figure~\ref{figTwoDTree}, we see that for every odd $i$, this initial segment consists of a path $X = (1, \ldots, \frac{n}{2})$ with the the center edge $x = \{\frac{n}{4}, \frac{n}{4}+1\}$ replaced by the $2$-length path $\Cone_i(\boundary x)$. Similarly, for even $i$, the initial segment consists of a path  $Y = (\frac{n}{4}, \ldots, 1, \frac{n}{2}, \ldots, \frac{n}{4}+1)$  with the the center edge $y = \{1, \frac{n}{2}\}$ replaced by the $2$-length path $\Cone_i(\boundary y)$. Let us treat $X$ and $Y$, rightfully, as $1$-trees over $[n/2]$ and note the following three useful properties.  Firstly, $\Cut(x, X)$ and $\Cut(y, Y)$ are very large ($n^2/16$, namely of order of the total number of $1$-simplices) and this helps in having a constant fraction of $2$-faces which are ``good'' for the final $2$-tree. Secondly, observe that $\Fill(y, X)$ and $\Fill(x, Y)$ are large ($n/2$; of order of a $1$-tree on $n$ vertices) and hence contribute a large number of simplices to the filling of a good $2$-simplex at each level. Finally observe that $\Fill(y, X)$ contains $x$ and $\Fill(x, Y)$ contains $y$ so that $\Fill(z, T^1_i)$ contains $i-1$ where $z = y$ when $i$ is odd and $z=x$ otherwise, for every $i \in \{\frac{n}{2}+1, \ldots, n-1\}$. Notice also, that the explicit structure of $T_i^1$ on the vertices in $C$ was not used, and may replaced with an arbitrary maximal acyclic extension. We also note that while the fact that $Z_i$ was simple facilitated an explicit description of $\Fill^1_i$, this is not really an essential part of the proof, once one observes the properties of the edges $x$ and $y$ in the odd and even trees respectively.

For $d \geq 3$, it is difficult to explicitly describe a pair of $(d-1)$-trees with the above properties, but we show that from any $(d-1)$-tree with a large average filling-volume, we can construct a pair of $(d-1)$-trees which have the above properties. Firstly, we show that in any tree with a large filling-volume, there is a simplex which  is part of a large number of fillings among which at least one is large. A carefully chosen isomorphism of such a tree will give its pair. Notice that given a tree $T$ and $\tau \in T$, the number of fillings in which $\tau$ participates is $\size{\Cut(\tau, T)}$.

\begin{lemma}
\label{lemmaLargeAvgCapLargef}
In any $d$-tree $T$ on $n$ vertices with $\mu(T) \geq c {n-1 \choose
  d}$, there exist $\tau \in T$ and $\sigma \in \Cut(\tau, T)$ such
that $\size{\Cut(\tau, T)} \cdot \size{\Fill(\sigma, T)} \geq \frac{c^3}{8} {n \choose d+1}{n-1 \choose d}$.
\end{lemma}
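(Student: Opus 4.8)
The plan is to reduce the lemma to two rounds of averaging, once the right duality between fillings and cuts is in place. The crux is the identity
\[
\sigma \in \Cut(\tau,T) \iff \tau \in \Fill(\sigma,T),
\]
valid for every $d$-tree $T$, every $\tau \in T$, and every $d$-simplex $\sigma$ of $K^d_n$. When $\sigma \notin T$, the fundamental cycle $\sigma \oplus \Fill(\sigma,T)$ is the unique minimal dependency inside $T \cup \{\sigma\}$ (cf.\ the discussion of $\Fill$ in Section~\ref{sectionPreliminaries}), so $T \oplus \tau \oplus \sigma$, which has the correct size $\binom{n-1}{d}$, is acyclic --- hence a tree --- exactly when $\tau$ lies on this fundamental cycle, i.e.\ $\tau \in \Fill(\sigma,T)$; the remaining cases $\sigma = \tau$ and $\sigma \in T\setminus\{\tau\}$ follow from $\Fill(\tau,T) = \{\tau\}$ together with the size count. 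I expect establishing this identity, with its degenerate cases, to be the only real step; everything afterwards is pigeonhole.

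Granting the duality, I would count the incidences $\{(\sigma,\tau) : \tau \in \Fill(\sigma,T)\}$ in two ways to obtain
\[
\sum_{\sigma \in K^d_n} \size{\Fill(\sigma,T)} \;=\; \sum_{\tau \in T} \size{\Cut(\tau,T)} \;=\; \mu(T)\binom{n}{d+1} \;\geq\; c\binom{n-1}{d}\binom{n}{d+1}.
\]
Then localize the heavy fillings: call $\sigma$ \emph{heavy} if $\size{\Fill(\sigma,T)} \geq \tfrac{c}{2}\binom{n-1}{d}$. The at most $\binom{n}{d+1}$ light simplices contribute less than $\tfrac{c}{2}\binom{n-1}{d}\binom{n}{d+1}$ to the left-hand side, so the heavy ones contribute at least $\tfrac{c}{2}\binom{n-1}{d}\binom{n}{d+1}$. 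Rewriting this heavy contribution as $\sum_{\tau\in T}\size{\{\sigma\text{ heavy}: \tau\in\Fill(\sigma,T)\}}$ and averaging over the $\size{T}=\binom{n-1}{d}$ choices of $\tau$, some $\tau\in T$ lies in the filling of at least $\tfrac{c}{2}\binom{n}{d+1}$ heavy simplices. By the duality, $\Cut(\tau,T)$ then contains at least $\tfrac{c}{2}\binom{n}{d+1}$ heavy simplices, so in particular $\size{\Cut(\tau,T)} \geq \tfrac{c}{2}\binom{n}{d+1}$, and we may take any heavy $\sigma \in \Cut(\tau,T)$, for which $\size{\Fill(\sigma,T)} \geq \tfrac{c}{2}\binom{n-1}{d}$.

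Multiplying the two bounds gives $\size{\Cut(\tau,T)}\cdot\size{\Fill(\sigma,T)} \geq \tfrac{c^2}{4}\binom{n}{d+1}\binom{n-1}{d}$, which is at least $\tfrac{c^3}{8}\binom{n}{d+1}\binom{n-1}{d}$ since $c \leq 1$ --- indeed $\size{\Fill(\sigma,T)}\leq\size{T}=\binom{n-1}{d}$ forces $\mu(T)\leq\binom{n-1}{d}$, hence $c\leq 1$. This completes the argument, with the $\Cut$--$\Fill$ duality being the only non-routine ingredient and the source of whatever care the write-up needs.
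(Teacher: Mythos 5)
Your proof is correct, and both your argument and the paper's rest on the same two pillars: the duality $\sigma\in\Cut(\tau,T)\iff\tau\in\Fill(\sigma,T)$ (which the paper uses implicitly, writing ``each filling being added once for each $d$-face contained in it'') and the observation that a $c/2$ fraction of fillings have size at least $\tfrac{c}{2}\binom{n-1}{d}$. Where the two differ is in how the averaging over $\tau$ is organized. The paper defines $f(\tau,T)=\sum_{\sigma\in\Cut(\tau,T)}\size{\Fill(\sigma,T)}$, identifies $\sum_{\tau}f(\tau,T)$ with $\sum_{\sigma}\size{\Fill(\sigma,T)}^2$ (the squared $\ell_2$-norm of filling-volumes), lower-bounds that via the heavy fillings, and finally uses $\size{\Cut(\tau,T)}\cdot\max_{\sigma\in\Cut(\tau,T)}\size{\Fill(\sigma,T)}\geq f(\tau,T)$; passing through the product $\size{\Cut}\cdot\max$ in this way is what costs the extra factor of $c$. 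You instead localize the heavy $\sigma$'s first, count heavy--cut incidences directly, and average only the heavy count over $\tau\in T$, so you obtain both the cut bound $\size{\Cut(\tau,T)}\geq\tfrac{c}{2}\binom{n}{d+1}$ and the filling bound $\size{\Fill(\sigma,T)}\geq\tfrac{c}{2}\binom{n-1}{d}$ separately for a heavy $\sigma$ in that cut. This yields $\tfrac{c^2}{4}$ in place of the paper's $\tfrac{c^3}{8}$; since $c\leq 1$ your bound is strictly stronger, though it would not change the paper's downstream constant $c_d$ by more than the exponent base. Your argument for the duality, including the degenerate cases $\sigma=\tau$ and $\sigma\in T\setminus\{\tau\}$, is also correct and is the part the paper glosses over.
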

\begin{proof}

For any $\tau \in T$, let $f(\tau,  T) = \sum_{\sigma \in \Cut(\tau, T)}{\size{\Fill(\sigma, T)}}$. Let $\sigma^*$ be a $d$-simplex which produces a largest filling in $T$ among all the simplices in $\Cut(\tau, T)$. Since $\size{\Cut(\tau, T)} \cdot {\size{\Fill(\sigma^*, T)}} \geq f(\tau, T)$, by the remark above, it suffices to show that there exists $\tau \in T$ with $f(\tau, T) \geq  \frac{c^3}{8} {n \choose d+1}{n-1 \choose d}$.

If we sum $f(\tau, T)$ over all $\tau \in T$, we add the sizes of fillings of $T$, with each filling being added once for each $d$-face contained in it. That is $\sum_{\tau \in T} f(\tau, T) = \sum_{\sigma \in K^d_n} {\size{\Fill(\sigma, T)}}^2$, which is the square of the $l_2$-norm of the filling-volumes. Since the maximum size of a filling in $T$ is ${n-1 \choose d}$ (the size of $T$) and the average filling-volume of $T$ is at least $c {n-1 \choose d}$ (by assumption), it is not difficult to see that at least $c/2$ fraction of the fillings have a size at least half the average. Hence $\sum_{\tau \in T} f(\tau, T)$ is at least $\left( \frac{c}{2} {n \choose d+1}\right) \left( \frac{c}{2}{n-1 \choose d} \right)^2 = \frac{c^3}{8} {n \choose d+1} {n-1 \choose d}^2$. By averaging there exists at least one $\tau$ in $T$ which satisfies our required bound.  \end{proof}

By a systematic extension of the $2$-dimensional construction, keeping intact the essential features noted above, we establish the following theorem; a complete proof of which is given in the appendix.

\begin{theorem}
\label{theoremTreesOfLargeAvgCapSize}
For any two positive integers $d$ and $n$, there exists a $d$-tree $T$ on $[n]$ with $\mu(T) \geq c_d {n-1 \choose d}$, where $c_d = 16(48)^{-3^{d-1}}$.
\end{theorem}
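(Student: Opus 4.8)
The plan is to prove Theorem~\ref{theoremTreesOfLargeAvgCapSize} by induction on $d$, mirroring the explicit $2$-dimensional construction but replacing the hand-crafted pair $X,Y$ of $1$-trees by a pair of $(d-1)$-trees extracted from the inductive hypothesis via Lemma~\ref{lemmaLargeAvgCapLargef}. The base case $d=1$ is a Hamiltonian path on $[n]$, whose average filling-volume is $\Theta(n)$, giving some absolute constant $c_1$; one then checks $c_1 \geq 16 \cdot 48^{-1}$ (adjusting the path/constant if needed so the stated closed form holds). For the inductive step, assume a $(d-1)$-tree $S$ on $[m]$ with $\mu(S) \geq c_{d-1}\binom{m-1}{d-1}$ where $m = \ceil{(1-1/d)n}$. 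Apply Lemma~\ref{lemmaLargeAvgCapLargef} to $S$ to obtain a face $\tau \in S$ and $\sigma \in \Cut(\tau,S)$ with $\size{\Cut(\tau,S)}\cdot\size{\Fill(\sigma,S)} \geq \tfrac{c_{d-1}^3}{8}\binom{m}{d}\binom{m-1}{d-1}$. The two trees playing the roles of $X$ and $Y$ will be $S$ itself (where $\tau$ has a large cut but $\Fill(\tau,S)$ may be small) and a relabelling $S'$ of $S$ chosen so that the image of $\tau$ is exactly $\sigma$ and the image of $\sigma$ is $\tau$ — so that $\Fill(\tau,S')$ is large and, crucially, $\tau \in \Fill(\sigma,S)$ while (by the symmetry of the relabelling) $\sigma \in \Fill(\tau,S')$; this is the $(d-1)$-dimensional analogue of the facts ``$\Fill(y,X) \ni x$ and $\Fill(x,Y) \ni y$''.

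Next I would build the sequence $\{T^{d-1}_i\}_{i=m}^{n-1}$ of $(d-1)$-trees on $[i]$ that drives the conical extension~(\ref{eqnNiceSequence}): for odd $i$ take $T^{d-1}_i$ to agree with $S$ on $[m]$ except that the face $\tau$ is replaced by $\Cone_i(\boundary\tau)$, and on $\{m+1,\dots,i-1\}$ extend by an arbitrary maximal acyclic extension; for even $i$ use $S'$ with $\sigma$ (the image of $\tau$) replaced by $\Cone_i(\boundary\sigma)$, similarly extended. Setting $T^d_m$ to be an arbitrary $d$-tree on $[m]$ and applying~(\ref{eqnNiceSequence}) gives the target $d$-tree $T = T^d_n$, which is nice and in particular collapsible. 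I would then define ``good'' $d$-simplices $\{a_0,\dots,a_{d-1},c\}$ where $\{a_0,\dots,a_{d-1}\}$ ranges over $\Cut(\tau,S)$ (viewed as a $(d-1)$-face on $[m]$) and $c \geq m+2$; there are $\size{\Cut(\tau,S)}$ choices for the base face and $\Theta(n)$ choices for $c$, so good simplices are a constant fraction of all $\binom{n}{d+1}$ $d$-faces. The point of the alternating construction is that, exactly as in Claim~\ref{...} of the $2$-dimensional case, iterating Claim~\ref{cl:captrick} down from $i=n$ shows that for a good $\sigma$ the intermediate $(d-1)$-cycles $Z_i$ keep passing through the vertex $i-1$ (because $\Fill(\tau,S')$ contains $\sigma$ and $\Fill(\sigma,S)$ contains $\tau$), so the link is nonempty and each level $i \in \{m+1,\dots,c-1\}$ contributes $\size{\Fill^{d-1}_{i-1}}$, which is at least $\min\{\size{\Fill(\tau,S')},\size{\Fill(\sigma,S)}\}$-ish — at any rate of order $\size{\Fill(\sigma,S)}$ — to $\size{\Fill(\sigma,T)}$.

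Summing, $\sum_{\sigma\text{ good}}\size{\Fill(\sigma,T)} \gtrsim \size{\Cut(\tau,S)} \cdot \sum_{c=m+3}^{n}(c-m-2)\cdot\Theta(\size{\Fill(\sigma,S)}) \gtrsim \size{\Cut(\tau,S)}\cdot\size{\Fill(\sigma,S)}\cdot\Theta(n^2)$, and plugging in the Lemma bound $\size{\Cut(\tau,S)}\cdot\size{\Fill(\sigma,S)} \geq \tfrac{c_{d-1}^3}{8}\binom{m}{d}\binom{m-1}{d-1} = \Theta(n^{2d-1})$ yields $\sum_{\sigma\text{ good}}\size{\Fill(\sigma,T)} = \Omega(c_{d-1}^3 n^{2d+1})$. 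Dividing by $\binom{n}{d+1} = \Theta(n^{d+1})$ gives $\mu(T) = \Omega(c_{d-1}^3 n^d) = \Omega(c_{d-1}^3)\binom{n-1}{d}$. Tracking the universal constant hidden in the $\Omega$ carefully — it is the one bookkeeping task that cannot be skipped — one gets a recursion of the form $c_d \geq \kappa\, c_{d-1}^3$ for an absolute constant $\kappa$; choosing constants so that $c_1 = 16\cdot 48^{-1}$ and $\kappa = 1/48^{\,?}$ works out, this solves to $c_d = 16\cdot 48^{-3^{d-1}}$ as claimed. The main obstacle I anticipate is the relabelling argument: one must verify that a single relabelling of $S$ can simultaneously send $\tau \mapsto \sigma$ and $\sigma \mapsto \tau$ (an involution on $[m]$ suffices when $\tau,\sigma$ are disjoint, but in general one needs to handle shared vertices and the effect on $\Fill(\sigma,S)$ versus $\Cut(\tau,S')$ carefully), and that after the center-face swap and arbitrary extension the two properties ``large cut of the swapped face'' and ``the swapped-in cone of one tree's special face meets the other tree's special face'' both survive — this is precisely where the clean $2$-dimensional picture has to be replaced by the combinatorial consequences of Lemma~\ref{lemmaLargeAvgCapLargef} rather than an explicit description.
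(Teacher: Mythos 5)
Your proposal matches the paper's proof essentially step for step: the same induction on $d$ with the Hamiltonian path as base case, the same application of Lemma~\ref{lemmaLargeAvgCapLargef} to extract a face $\tau$ and a mate $\sigma\in\Cut(\tau,S)$ whose cut-times-fill product is large, the same pair of alternating conical extensions driven by $S$ (with $\tau$ coned out) and a relabelled copy $S'$ (with $\sigma$ coned out), and the same accounting over good simplices $\{c\}\cup\gamma'$ with $\gamma'$ in the appropriate cut. The one obstacle you flag — whether a single vertex relabelling of $[m]$ can swap $\tau$ and $\sigma$ — is in fact harmless: take the involution fixing $[m]\setminus(\tau\oplus\sigma)$ and pairing $\tau\setminus\sigma$ with $\sigma\setminus\tau$; since $\sigma\in\Cut(\tau,S)$ is equivalent to $\tau\in\Fill(\sigma,S)$, this involution automatically transports the cut and fill data to $S'$, which is exactly what the paper relies on when it asserts ``$y\in Y$ and $x\in\Cut(y,Y)$''.
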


\section{A remark on cycles and hypertrees over $\Q$.}
\label{sectionRationalRemarks}

We conclude by pointing out that our results over $\F_2$ extend to $\Q$. Although cycles over $\F_2$ are not necessarily cycles over $\Q$, one can verify that if $Z$ is a simple $\F_2$-cycle, then it is contained in the support of a simple $\Q$-cycle. This follows from the $\F_2$-acyclicity of $Z' = Z \setminus \{\sigma\}$ for any simplex $\sigma \in Z$, which in turn implies $\Q$-acyclicity for $\Z'$. Hence (i)  a Hamiltonian $d$-cycle over $\Q$ exists whenever a Hamiltonian $d$-cycle over $\F_2$ exists and (ii) the average filling-volume of a $d$-tree $T$ computed over $\Q$ is at least as big as the average filling-volume of $T$ computed over $\F_2$.

It is possible that simple cycles over $\Q$ may be larger than largest simple cycles over $\F_2$. We know that this is indeed the case when $d = n-4$ from a work on hypercuts, which are duals of simple cycles \cite{linial2014Shadows}.
We believe that for $d =2$, we can have an $n$-vertex Hamiltonian cycle over $\Q$ for all large  $n$, in contrast with the $\F_2$-case. Though the construction in the proof of Theorem~\ref{theoremLargeForestGivenBoundary} is done over $\F_2$, with a proper generalization of the cone and link operators, we can do this construction over any field. Let us call a chain over $\Q$ whose support is a $\Q$-tree a {\em $\Q$-weighted tree}.  There are no parity restrictions over $\Q$, but one can show that when $n \leq 5$, all the boundaries forbidden for $n$-vertex $2$-trees over $\F_2$ are forbidden for $n$-vertex $\Q$-weighted $2$-trees too. One can establish that these are the only restrictions over $\Q$. That is, 

\begin{claim}
Given any $1$-cycle $Z$ over $\Q$ on $[n]$, $n \geq 6$, we can construct a $\Q$-weighted tree $T$ on $[n]$ with $\boundary T = Z$. In particular, $n$-vertex Hamiltonian $2$-cycles over $\Q$ exist for every $n \geq 6$.
\end{claim}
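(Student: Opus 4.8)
The plan is to adapt the construction of Theorem~\ref{theoremLargeForestGivenBoundary} to work over $\Q$ rather than $\F_2$, and then to handle the small residual obstructions separately. First I would set up the cone, link and conical-extension operators over $\Q$: the cone operator on an oriented $d$-chain is defined by $\Cone_v(\sum c_\sigma \sigma) = \sum c_\sigma (\{v\}\ast\sigma)$ with the sign convention that makes $\boundary\Cone_v(Y) = Y - \Cone_v(\boundary Y)$ (the signed analogue of~\eqref{eqnBoundaryLift}), and $\Link_v$ as the corresponding signed right-truncation. With these in hand, the conical extension $\Ext_v(X,Y) = X + \Cone_v(Y)$ still sends a pair consisting of an acyclic $d$-chain and an acyclic $(d-1)$-chain to an acyclic $d$-chain, and sends a $\Q$-weighted $d$-tree and a $\Q$-weighted $(d-1)$-tree on $[n]$ to a $\Q$-weighted $d$-tree on $[n]\cup\{v\}$ (again by the dimension count ${n-1\choose d}+{n-1\choose d-1}={n\choose d}$, which is what makes an acyclic chain of the right size a tree).

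The heart of the argument is then the $\Q$-analogue of the induction in Theorem~\ref{theoremLargeForestGivenBoundary}, specialised to $d=2$ and to the case where we ask for an actual tree (co-rank $0$) rather than a forest. Given a $1$-cycle $Z$ over $\Q$ on $[n]$ with $n\geq 6$, relabel so that vertex $n$ appears in $\supp(Z)$, set $Z' = \Link_n(Z)$ (a $\Q$-valued $0$-cycle on $[n-1]$, i.e. a rational vector summing to zero), pick a $\Q$-weighted tree $T^1_{n-1}$ on $[n-1]$ with $\boundary T^1_{n-1} = Z'$ and avoiding the single forbidden ``exceptional'' choice that would make the next cycle vanish, define $Z_{n-1} = Z - \Cone_n(Z') - \boundary(\text{something})$ exactly as in~\eqref{equationdDrecursion} (with signs), recurse to get a $\Q$-weighted tree $T^2_{n-1}$ on $[n-1]$ with that boundary, and finally set $T^2_n = \Ext_n(T^2_{n-1}, T^1_{n-1})$. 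The boundary computation is the signed mirror of the one in the proof of Theorem~\ref{theoremLargeForestGivenBoundary}, and the size is automatically ${n-1\choose 2}$, so $T^2_n$ is a genuine $\Q$-weighted $2$-tree; the $1$-dimensional base case (spanning trees with prescribed rational $0$-cycle boundary) is routine.

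The main obstacle is the small cases, i.e. pinning down exactly \emph{which} $1$-cycles over $\Q$ are realisable for small $n$ so that the induction has a clean base, and checking that $n=6$ really is the threshold. One must verify: (i) for $n\leq 5$ the boundaries forbidden over $\F_2$ (those with $|Z|\not\equiv {n-1\choose 2}\bmod 2$ in the integral/support sense) remain forbidden over $\Q$ — this needs a genuine argument, presumably examining the rank of the boundary matrix of $K_5^2$ and the orientation/parity invariant that survives over $\Q$; and (ii) for $n\geq 6$ there is no obstruction at all, which amounts to showing the single ``exceptional'' forbidden extension in each inductive step can always be avoided once $n\geq 6$, together with bottoming out the recursion at $n=6$ rather than at a forbidden small configuration. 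Concretely, the recursion lowers $n$ by one at each step, so one reduces an arbitrary $\Q$-cycle on $[n]$, $n\ge 6$, down to one on $[6]$, and must directly exhibit a $\Q$-weighted $2$-tree on $[6]$ for \emph{every} rational $1$-cycle on $[6]$; this finite check (and its contrast with $[5]$) is where the real content of the $n\geq 6$ bound lives. Once that is done, taking $Z = \boundary\sigma$ for a $2$-simplex $\sigma$ on $[n]$, $n\geq 6$, gives a $\Q$-weighted $2$-tree $T$ with $\boundary T = \boundary\sigma$, whence $T - \sigma$ (suitably rescaled so $\sigma$ has coefficient $\pm1$) is a Hamiltonian $2$-cycle over $\Q$, proving the claim.
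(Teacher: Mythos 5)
Your proposal is correct and takes essentially the same approach as the paper's (very terse) proof sketch: establish the base case $n=6$ by a direct finite check, then use the signed conical-extension/induction machinery of Theorem~\ref{theoremLargeForestGivenBoundary} adapted to $\Q$ to lift the result to all $n > 6$, finally specializing $Z = \boundary\sigma$ to obtain the Hamiltonian $2$-cycle. You correctly identify that the genuine content lies in (i) setting up the signed cone/link operators so that the recursion~\eqref{equationdDrecursion} and the boundary telescoping go through over $\Q$, and (ii) verifying that $n=6$ is the exact threshold by the base-case analysis, which is precisely what the paper defers to ``case analysis.''
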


The proof idea is to use a case analysis to establish the claim for $n=6$ and then use the conical extension to construct a $\Q$-weighted tree whose boundary is $Z$. This will have an immediate effect in higher dimensions on the co-rank term in a result for $\Q$ analogous to Theorem~\ref{theoremLargeForestGivenBoundary}.

\bibliography{../../deepak}

\begin{thebibliography}{10}

\bibitem{aronshtam2015threshold}
Lior Aronshtam and Nathan Linial.
\newblock The threshold for $d$-collapsibility in random complexes.
\newblock {\em Random Structures \& Algorithms}, 2015.

\bibitem{aronshtam2015TopHomology}
Lior Aronshtam and Nathan Linial.
\newblock When does the top homology of a random simplicial complex vanish?
\newblock {\em Random Structures \& Algorithms}, 46(1):26--35, 2015.

\bibitem{aronshtam2013collapsibility}
Lior Aronshtam, Nathan Linial, Tomasz {\L}uczak, and Roy Meshulam.
\newblock Collapsibility and vanishing of top homology in random simplicial
  complexes.
\newblock {\em Discrete \& Computational Geometry}, 49(2):317--334, 2013.

\bibitem{babson2011fundamental}
Eric Babson, Christopher Hoffman, and Matthew Kahle.
\newblock The fundamental group of random 2-complexes.
\newblock {\em Journal of the American Mathematical Society}, 24(1):1--28,
  2011.

\bibitem{bjorner1995topological}
Anders Bj{\"o}rner.
\newblock Topological methods. {H}andbook of combinatorics, vol. 1, 2,
  1819--1872, 1995.

\bibitem{cadek2013extending}
Martin {\v{C}}adek, Marek Kr{\v{c}}{\'a}l, Ji{\v{r}}{\'\i} Matou{\v{s}}ek,
  Luk{\'a}{\v{s}} Vok{\v{r}}{\'\i}nek, and Uli Wagner.
\newblock Extending continuous maps: {P}olynomiality and undecidability.
\newblock In {\em Proceedings of the forty-fifth annual ACM symposium on Theory
  of computing}, pages 595--604. ACM, 2013.

\bibitem{carlsson2009topology}
Gunnar Carlsson.
\newblock Topology and data.
\newblock {\em Bulletin of the American Mathematical Society}, 46(2):255--308,
  2009.

\bibitem{dey2006curve}
Tamal~K Dey.
\newblock {\em Curve and surface reconstruction: {A}lgorithms with mathematical
  analysis}, volume~23.
\newblock Cambridge University Press, 2006.

\bibitem{fox2011overlap}
Jacob Fox, Mikhail Gromov, Vincent Lafforgue, and Assaf Naor.
\newblock Overlap properties of geometric expanders.
\newblock In {\em Proc. 22nd ACM-SIAM Symposium on Discrete Algorithms}, number
  EPFL-CONF-170579, pages 1188--1197. Walter De Gruyter \& Co, 2011.

\bibitem{ghrist2014elementary}
Robert Ghrist.
\newblock Elementary applied topology.
\newblock {\em Book in preperation}, 2014.

\bibitem{gromov2010singularities}
Mikhail Gromov.
\newblock Singularities, expanders and topology of maps. {P}art 2: From
  combinatorics to topology via algebraic isoperimetry.
\newblock {\em Geometric and Functional Analysis}, 20(2):416--526, 2010.

\bibitem{herlihy1999topological}
Maurice Herlihy and Nir Shavit.
\newblock The topological structure of asynchronous computability.
\newblock {\em Journal of the ACM (JACM)}, 46(6):858--923, 1999.

\bibitem{kahn1984topological}
Jeff Kahn, Michael Saks, and Dean Sturtevant.
\newblock A topological approach to evasiveness.
\newblock {\em Combinatorica}, 4(4):297--306, 1984.

\bibitem{kalai1983enumeration}
Gil Kalai.
\newblock Enumeration of {$Q$}-acyclic simplicial complexes.
\newblock {\em Israel Journal of Mathematics}, 45(4):337--351, 1983.

\bibitem{kaufman2014ramanujan}
Tali Kaufman, David Kazhdan, and Alexander Lubotzky.
\newblock Ramanujan complexes and bounded degree topological expanders.
\newblock In {\em Foundations of Computer Science (FOCS), 2014 IEEE 55th Annual
  Symposium on}, pages 484--493. IEEE, 2014.

\bibitem{linial2006homological}
Nathan Linial and Roy Meshulam.
\newblock Homological connectivity of random 2-complexes.
\newblock {\em Combinatorica}, 26(4):475--487, 2006.

\bibitem{linial2010SumComplexes}
Nathan Linial, Roy Meshulam, and Mishael Rosenthal.
\newblock Sum complexes - a new family of hypertrees.
\newblock {\em Discrete \& Computational Geometry}, 44(3):622--636, 2010.

\bibitem{linial2014phase}
Nathan Linial and Yuval Peled.
\newblock On the phase transition in random simplicial complexes.
\newblock {\em arXiv preprint arXiv:1410.1281}, 2014.

\bibitem{linial2014Shadows}
Nati Linial, Ilan Newman, Yuval Peled, and Yuri Rabinovich.
\newblock Extremal problems on shadows and hypercuts in simplicial complexes.
\newblock {\em arXiv preprint arXiv:1408.0602}, 2014.

\bibitem{lubotzky2015random}
Alexander Lubotzky and Roy Meshulam.
\newblock Random {L}atin squares and 2-dimensional expanders.
\newblock {\em Advances in Mathematics}, 272:743--760, 2015.

\bibitem{matousek2014embeddability}
Ji{\v{r}}{\'\i} Matou{\v{s}}ek, Eric Sedgwick, Martin Tancer, and Uli Wagner.
\newblock Embeddability in the $3$-sphere is decidable.
\newblock In {\em Proceedings of the thirtieth annual symposium on
  Computational geometry}, page~78. ACM, 2014.

\bibitem{matousek2011hardness}
Ji{\v{r}}{\'\i} Matou{\v{s}}ek, Martin Tancer, and Uli Wagner.
\newblock Hardness of embedding simplicial complexes in $\mathbb{R}^d$.
\newblock {\em Journal of the European Mathematical Society}, 13(2):259--295,
  2011.

\bibitem{matousek2008borsuk}
Ji\v{r}{\'i} Matou{\v{s}}ek.
\newblock {\em Using the Borsuk-Ulam theorem: {L}ectures on topological methods
  in combinatorics and geometry}.
\newblock Springer Science \& Business Media, 2008.

\bibitem{meshulam2009homological}
Roy Meshulam and Nathan Wallach.
\newblock Homological connectivity of random $k$-dimensional complexes.
\newblock {\em Random Structures \& Algorithms}, 34(3):408--417, 2009.

\bibitem{newman2013multiplicative}
Ilan Newman and Yuri Rabinovich.
\newblock On multiplicative $\lambda$-approximations and some geometric
  applications.
\newblock {\em SIAM Journal on Computing}, 42(3):855--883, 2013.

\bibitem{newman2015connectivity}
Ilan~I Newman and Yuri Rabinovich.
\newblock On connectivity of the facet graphs of simplicial complexes.
\newblock {\em arXiv preprint arXiv:1502.02232}, 2015.

\bibitem{saks2000wait}
Michael Saks and Fotios Zaharoglou.
\newblock Wait-free k-set agreement is impossible: The topology of public
  knowledge.
\newblock {\em SIAM Journal on Computing}, 29(5):1449--1483, 2000.

\end{thebibliography}

\clearpage
\appendix

\section{Proof of Theorem~\ref{theoremTreesOfLargeAvgCapSize}}

\paragraph{Statement.} For any two positive integers $d$ and $n$, there exists a $d$-tree $T$ on $[n]$ with $\mu(T) \geq c_d {n-1 \choose d}$, where $c_d = 16(48)^{-3^{d-1}}$.

\begin{proof} 
For $d=1$, the required tree $T$ is the Hamiltonian path on $n$ vertices. It is easy to verify that $\mu(T) \geq \frac{1}{3}(n-1)$ as claimed. Hence let $d \geq 2$ and we assume that the statement of the theorem is true in dimension $d-1$. In particular, we will use a $(d-1)$-tree $X$ on $m$ vertices, $m = \ceil{(1 - 1/d)n}$, with large average filling-volume in the construction of $T$. Since $c_d {n-1 \choose d} \leq 1$ for $n < 10d$, we assume $n \geq 10d$. We set $c = c_{d-1}$ (for readability). 

We construct a sequence of $d$-trees, $T^d_{m+1}, T^d_{m+2}, \ldots, T^d_n = T$, where $T^d_{m+1}$ is an arbitrary
$d$-tree on $[m+1]$ and each $T^d_{i+1}$, $i \in \{m+1, \ldots, n-1\}$, is a conical extension of $T^d_{i}$. That is,

\begin{equation}
	T^d_{i+1} = \Ext_{i+1}(T^d_{i}, T^{d-1}_{i}), \quad i
        \in \{m+1, \ldots, n-1\}, 
\end{equation}
where $T^{d-1}_{i}$ is a $(d-1)$-tree on $[i]$ which we construct as described next.

For each $i \geq m+1$, we construct $T^{d-1}_i$ based on a $(d-1)$-tree $\Xan$ on the vertex set $[m]$ which has a large average filling-volume, i.e., $\mu(\Xan) \geq c {m - 1 \choose d-1}$. Such a tree $X$ exists by the induction hypothesis. By Lemma~\ref{lemmaLargeAvgCapLargef}, there exist $x \in \Xan$  and $y \in \Cut(x, \Xan)$ such that $\size{\Cut(x, \Xan)} \cdot \size{\Fill(y, \Xan)} \geq \frac{c^3}{8} {m-1 \choose d-1} {m \choose d}$. Let $\Yan$ be a $(d-1)$-tree isomorphic to $\Xan$ obtained from $\Xan$ by relabelling its vertices so that $x$ and $y$ exchange their roles. That is, $y \in \Yan$ and $x \in \Cut(y, \Yan)$ with $\size{\Cut(y, \Yan)} \size{\Fill(x, \Yan)} \geq \frac{c^3}{8} {m-1 \choose d-1} {m \choose d}$.  For an odd number $i$ in $\{m +1, \ldots, n-1 \}$, we construct a forest $F_i$ on the vertex set $[m] \cup \{i\}$ by removing $x$ from $X$ and  adding $\Cone_i (\boundary x)$ in its place, i.e., $F_i = \Xan \oplus x \oplus \Cone_i(\boundary x)$. Note that since $X$ is acyclic, $F_i$ is acyclic. Though it is not a tree  on $[m] \cup \{i\}$ yet, any $(d-1)$-face $\sigma \in \Cut(x, X)$ creates a cycle in $F_i$. More precisely, for any $\sigma \in \Cut(x, X)$, 
\begin{equation} 
\label{eqnCapFiOdd} 
\Fill(\sigma, F_i) = \Fill(\sigma, X) \oplus x \oplus \Cone_i (\boundary x).
\end{equation}

Similarly, for an even number $i$ in $\{m +1, \ldots, n-1 \}$, we construct a forest $F_i$ on the vertex set $[m] \cup \{i\}$ by removing $y$ from $Y$ and  adding $\Cone_i(\boundary y)$ in its place, i.e., $F_i = \Yan \oplus y \oplus \Cone_i(\boundary y)$. Note that $F_i$ is acyclic in this case too and  for any $\sigma \in \Cut(y, Y)$,
\begin{equation}
\label{eqnCapFiEven}
	\Fill(\sigma, F_i) = 
	\Fill(\sigma, Y) \oplus y \oplus \Cone_i(\boundary y), 
\end{equation}

Finally, we let $T^{d-1}_i$ be any $(d-1)$-tree on $[i]$ containing $F_i$. This completes the construction of $T^d_n$ and we are all set to estimate $\mu(T^d_n)$. We note here that this construction is a generalization of that in the explicit proof for $d=2$.  The forests $F_i$ are just the trees $T_i$ with out the extension to the part in $C$ and the faces $x$ and $y$ are just edges $\{\frac{n}{4}, \frac{n}{4}+1\}$ and $\{1,\frac{n}{2} \}$ respectively. 

\begin{insideclaim}
\label{claimGoodCaps}
Let $\Gamma$ be the set of (``good'') $d$-faces of the form $\{i\} \cup \gamma'$ where $i \in \{m + 3, \ldots, n\}$ and $\gamma'$ belongs to $\Cut(x, \Xan)$ if $i$ is even and $\Cut(y, \Yan)$ if $i$ is odd. Then, for each $\gamma = \{i\} \cup \gamma' \in \Gamma$,
\[
	\size{\Fill(\gamma, T^d_n)} \geq \size{\Fill(y, X)} \cdot (i - m -2).
\]
\end{insideclaim}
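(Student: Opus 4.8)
The plan is to follow the two‑dimensional template: iterate Claim~\ref{cl:captrick} to strip the cone vertices $n, n-1, \dots, m+2$ off $\Fill(\gamma, T^d_n)$ as in Equation~\ref{eqnGeneralCap}, and then track the resulting sequence of $(d-1)$‑cycles through the levels that ``see'' $\gamma$.

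First, unwinding the recursion $T^d_{k+1} = \Ext_{k+1}(T^d_k, T^{d-1}_k)$ by applying Claim~\ref{cl:captrick} once for $k = n, n-1, \dots, m+2$ gives
\[
\Fill(\gamma, T^d_n) ~=~ \bigoplus_{k=m+2}^{n}\Cone_k\!\left(G_{k-1}\right)\;\oplus\;\Fill\!\left(W_{m+1},T^d_{m+1}\right),
\]
where $W_n = \boundary\gamma$, and, going down in $k$, $G_{k-1} = \Fill\!\left(\Link_k(W_k),T^{d-1}_{k-1}\right)$ and $W_{k-1} = W_k \oplus \Cone_k(\Link_k(W_k)) \oplus G_{k-1}$. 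As with Equation~\ref{eqnGeneralCap}, the summands are disjoint (the faces of $\Cone_k(G_{k-1})$ all have largest vertex $k$, and $\Fill(W_{m+1},T^d_{m+1})$ lives on $[m+1]$), so it suffices to show $\sum_{k=m+2}^{n}\size{G_{k-1}} \ge (i-m-2)\,\size{\Fill(y,X)}$, where we write $\gamma = \{i\}\cup\gamma'$ with $\gamma'\subseteq[m]$.

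The heart of the proof is a description of the trajectory $\{W_k\}$. Since $i$ is the largest vertex of $\gamma$, $\Link_k(W_k) = \emptyset$ and $W_k = \boundary\gamma$ for every $k>i$, so those levels contribute nothing. At level $k=i$, $\Link_i(\boundary\gamma) = \boundary\gamma'$; the parity of $i$ is chosen precisely so that ``$\gamma'\in\Cut(x,X)$ for $i$ even, $\gamma'\in\Cut(y,Y)$ for $i$ odd'' lines up with the parity‑based definition of $F_{i-1}$, so Equation~\ref{eqnCapFiOdd} or~\ref{eqnCapFiEven} evaluates $G_{i-1}$ inside $F_{i-1}\subseteq T^{d-1}_{i-1}$ (using the easy remark that a filling landing inside the forest $F_{i-1}$ agrees with the filling inside any tree containing $F_{i-1}$). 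Plugging into the recurrence and simplifying with Equation~\ref{eqnBoundaryLift} shows that $W_{i-1}$ is a simple $(d-1)$‑cycle on $[m]$ with the face $x$ (resp.\ $y$) removed and $\Cone_{i-1}(\boundary x)$ (resp.\ $\Cone_{i-1}(\boundary y)$) inserted, so $\Link_{i-1}(W_{i-1}) = \boundary x$ (resp.\ $\boundary y$). This propagates: by downward induction on $k$ from $i-1$ to $m+2$ one shows that $W_k$ is supported on $[m]\cup\{k\}$, with $k$ occurring only inside a copy of $\Cone_k(\boundary x)$ (for $k$ odd) or $\Cone_k(\boundary y)$ (for $k$ even); hence $\Link_k(W_k) = \boundary x$ or $\boundary y$ accordingly. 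In the inductive step $\Cone_k(\Link_k(W_k))$ cancels the cone part of $W_k$, while $G_{k-1} = \Fill(\Link_k(W_k),T^{d-1}_{k-1})$ is evaluated through $y\in\Cut(x,X)$, $x\in\Cut(y,Y)$ and Equations~\ref{eqnCapFiOdd}--\ref{eqnCapFiEven} as $\Fill(x,Y)\oplus y\oplus\Cone_{k-1}(\boundary y)$ or $\Fill(y,X)\oplus x\oplus\Cone_{k-1}(\boundary x)$; inserting this reproduces the same shape of $W_{k-1}$ with the roles of $x$ and $y$ swapped. In either case $\size{G_{k-1}} = \size{\Fill(y,X)} - 1 + d$: here $x\in\Fill(y,X)$ and $y\in\Fill(x,Y)$ (because $y\in\Cut(x,X)$, $x\in\Cut(y,Y)$), each of $\boundary x,\boundary y$ is a sum of $d$ faces, and $\size{\Fill(x,Y)} = \size{\Fill(y,X)}$ since $Y$ is isomorphic to $X$ with $x$ and $y$ exchanged.

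Summing over $k \in \{m+2, \dots, i-1\}$ — a range of $i-m-2 \ge 1$ values, as $i \ge m+3$ — gives $\size{\Fill(\gamma,T^d_n)} \ge \sum_{k=m+2}^{i-1}\size{G_{k-1}} \ge (i-m-2)(\size{\Fill(y,X)}-1+d) \ge (i-m-2)\,\size{\Fill(y,X)}$, using $d\ge 1$. I expect the main obstacle to be the middle step: one has to organize the downward induction so that the parity alternation between $x$ and $y$ stays straight and, at each level, confirm that the only faces of $W_k$ through vertex $k$ form exactly $\Cone_k(\boundary x)$ or $\Cone_k(\boundary y)$, so that the link is what the recursion needs; the symmetric‑difference cancellations themselves are mechanical but must be carried out carefully.
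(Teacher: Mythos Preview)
Your proof is correct and follows essentially the same route as the paper. The only organizational difference is that the paper writes down the candidate set $K_\gamma = \bigoplus_{j=m+1}^{i-1}\Cone_{j+1}(\Fill(\sigma_j,F_j))$ directly (with $\sigma_{i-1}=\gamma'$ and $\sigma_j\in\{x,y\}$ alternating by parity for $j<i-1$) and verifies by a telescoping boundary computation that $\partial K_\gamma=\partial\gamma\oplus Z$ for some $Z$ supported on $[m+1]$, whence $K_\gamma\cup\Fill(Z,T^d_{m+1})=\Fill(\gamma,T^d_n)$; you instead derive the same decomposition by iterating Claim~\ref{cl:captrick} and tracking the cycles $W_k$, exactly as in the $2$-dimensional argument around Equation~\ref{eqnGeneralCap}. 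Your $\Cone_k(G_{k-1})$ for $k\le i$ are precisely the paper's $\Cone_{j+1}(\Fill(\sigma_j,F_j))$, and both proofs discard the top level $k=i$ (the one involving $\gamma'$) when counting, yielding the same $(i-m-2)\cdot|\Fill(y,X)|$ lower bound.
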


We prove the above claim by establishing that $\Fill(\gamma, T^d_n)$ contains the following ``large'' set $K_{\gamma}$ defined below.
\begin{equation}
\label{eqnDefineKgamma}
K_{\gamma} = 
	\bigcupdot_{j = m + 1}^{i-1}
		\Cone_{j+1} \left( \Fill(\sigma_j, F_{j}) \right),
\end{equation}
where $\sigma_{i-1} = \gamma'$ and for all $j < i-1$, $\sigma_j$ is
$y$ when $j$ is odd and $x$ otherwise. Notice that $\size{K_{\gamma}}
\geq \size{\Fill(y, X)} \cdot (i - m -2)$ and hence we will establish Claim~\ref{claimGoodCaps} if we show that $K_{\gamma}$ is contained in $\Fill(\gamma, T^d_n)$. It follows from Equations (\ref{eqnCapFiOdd}) and (\ref{eqnCapFiEven}) that for $j \geq m + 1$, 
\begin{eqnarray}
\label{eqnBoundaryPart}
\boundary \Cone_{j+1} \left(\Fill(\sigma_j, F_j) \right)
	&=& \Cone_{j+1} (\boundary\sigma_j) 
			\oplus \Fill(\sigma_j, F_j) 
			\\
	&=& \Cone_{j+1}(\boundary\sigma_j) 
			\oplus H_j
			\oplus \Cone_j (\boundary\sigma_{j-1})
			\nonumber
\end{eqnarray}
where $H_j$ is some set of $(d-1)$-simplices on $[m]$. Substituting Equation~(\ref{eqnBoundaryPart}) in Equation~(\ref{eqnDefineKgamma}) results in a partial telescoping giving rise to the following.
\[
\begin{array}{rcl}
\boundary(K_{\gamma}) 
	&=& \Cone_i (\boundary\gamma' ) \oplus 
		\Cone_{m+1} (\boundary\sigma_{m}) \oplus 
		\sum_{j = m+1}^{i-1}H_j \\
	&=& \boundary\gamma \oplus  
		\gamma' \oplus
		\Cone_{m+1} (\boundary\sigma_{m}) \oplus
		\sum_{j = m+1}^{i-1}H_j \\
	&=& \boundary\gamma \oplus Z
\end{array}
\] 
where $Z$ is a collection of $(d-1)$-simplices on $[m+1]$. Since $Z =
\boundary(K_{\gamma}) + \boundary\gamma$, being a boundary,  is a $d$-cycle on $[m + 1]$ and $T^d_{m+1}$ is a tree on $[m+1]$, there exists a unique filling $K'$ of $Z$ in $T^d_{m+1}$. Then, since $K_{\gamma}$ and $K'$ are disjoint, $\boundary(K_{\gamma} \cup K') = \boundary\gamma$. This means, by definition, that $K_{\gamma} \cup K' = \Fill(\gamma, T^d_n)$ and it completes the proof of Claim~\ref{claimGoodCaps}.

We obtain the required lower bound on the sum of filling-volumes by adding the filling-volumes of all fillings generated by $d$-faces in $\Gamma$. The details of estimation are below.

\begin{equation*}
\begin{array}{rcll}
{n \choose d+1} \mu(T^d_n) 
&\geq&
	\sum_{\gamma \in \Gamma}
	\size{\Fill(\gamma, T^d_n)}\\
&\geq&
 	\sum_{i=m+3}^{n}
	\size{\Cut(x, \Xan)} \cdot
	\size{\Fill(y, \Xan)}{(i - m -2)}
	& (\textnormal{Claim \ref{claimGoodCaps}}) \\
&=&
	\size{\Cut(x, \Xan)}\cdot
	\size{\Fill(y, \Xan)}\cdot
	\frac{1}{2} \left(\frac{n}{d}-3\right) \left(\frac{n}{d}-2\right)
	& \left(n - m \geq \frac{n}{d} - 1\right) \\
&\geq&
	\frac{c^3}{2^4} 
	{m \choose d}
	{m  -1 \choose d-1}
	\frac{(n-3d)(n-2d)}{d^2}
	& \textnormal{(By choice of $x$ and $y$)}\\
&\geq&
	\frac{c^3}{2^4} \left(1 - \frac{1}{d}\right)^{2d-1}
	{n-1 \choose d}
	{n-2 \choose d-1}
	\frac{(n-3d)(n-2d)}{d^2}
	& \textnormal{(Observation \ref{obsBinomialBound})}\\
&\geq&
	\frac{c^3}{2^4} \left(1 - \frac{1}{d}\right)^{2d-1}
	{n-1 \choose d}
	\frac{1}{4}
	{n \choose d+1}
	& (n \geq 10d)\\ 
&\geq&
	\frac{c^3}{2^8} 
	{n-1 \choose d}
	{n \choose d+1} 
	& (\left(1 - \frac{1}{d}\right)^{2d-1} \geq \frac{1}{8}, \forall d \geq 2). 
\end{array}
\end{equation*}

Notice that $\frac{1}{2^8} (c_{d-1})^3 = c_d$.
\end{proof}

\begin{observation}
\label{obsBinomialBound}
Let $d$ and $n$ be any two positive integers. Then for $\alpha = (1-1/d)$ and $m = \ceil{\alpha n}$  
\[
\begin{array}{rcl}
	{m \choose d} 	&\geq& \alpha^d {n-1 \choose d},\; ;\\
	{m-1 \choose d-1} &\geq& \alpha^{d-1} {n-2 \choose d-1}.
\end{array}
\] 
\end{observation}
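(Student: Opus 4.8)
The plan is to reduce each of the two claimed inequalities to a term-by-term comparison of falling factorials. Write ${m \choose d} = \frac{1}{d!}\prod_{i=0}^{d-1}(m-i)$ and $\alpha^{d}{n-1 \choose d} = \frac{1}{d!}\prod_{i=0}^{d-1}\alpha(n-1-i)$; it then suffices to prove that $m-i \ge \alpha(n-1-i)$ for every $i \in \{0,1,\dots,d-1\}$. Since $m = \ceil{\alpha n} \ge \alpha n$, this follows once we check $\alpha n - i \ge \alpha(n-1-i)$, which rearranges to $\alpha(1+i) \ge i$, i.e. $\alpha \ge \tfrac{i}{1+i}$; and because $i \le d-1$ we indeed have $\tfrac{i}{1+i} \le \tfrac{d-1}{d} = 1-\tfrac1d = \alpha$, as the function $t \mapsto \tfrac{t}{1+t}$ is increasing.

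First I would dispose of the degenerate cases. If $d = 1$ both inequalities are trivial (here $\alpha = 0$, and the binomials with bottom $0$ equal $1$), so assume $d \ge 2$; and if $n-1 < d$ then both ${n-1 \choose d}$ and ${n-2 \choose d-1}$ vanish, so the claim is immediate since both left-hand sides are nonnegative, and we may assume $n \ge d+1$. Under this assumption $m \ge \alpha(d+1) = \tfrac{(d-1)(d+1)}{d} = d - \tfrac1d > d-1$, hence $m \ge d$ as $m$ is an integer; this guarantees that all the factors $m, m-1,\dots,m-d+1$ are positive, and likewise each $n-1-i \ge n-d \ge 1$ for $i \le d-1$, so the term-wise inequalities $m-i \ge \alpha(n-1-i) \ge 0$ may legitimately be multiplied out. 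Dividing by $d!$ yields the first bound.

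For the second inequality the argument is identical after an index shift: from ${m-1 \choose d-1} = \frac{1}{(d-1)!}\prod_{j=0}^{d-2}(m-1-j)$ and $\alpha^{d-1}{n-2 \choose d-1} = \frac{1}{(d-1)!}\prod_{j=0}^{d-2}\alpha(n-2-j)$ it suffices to show $m-1-j \ge \alpha(n-2-j)$ for $j \in \{0,\dots,d-2\}$, which using $m \ge \alpha n$ reduces to $\alpha(2+j) \ge 1+j$, i.e. $\alpha \ge \tfrac{1+j}{2+j}$; and since $j \le d-2$ this is again $\tfrac{1+j}{2+j} \le \tfrac{d-1}{d} = \alpha$. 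Positivity of all factors on both sides is ensured by $m \ge d$ and $n \ge d+1$ exactly as before, so multiplying and dividing by $(d-1)!$ finishes the proof. The whole argument is elementary; the only point calling for a little care is the index bookkeeping — verifying that the ranges $i \le d-1$ and $j \le d-2$ are precisely what make $\alpha \ge \tfrac{i}{1+i}$ and $\alpha \ge \tfrac{1+j}{2+j}$ hold with the chosen $\alpha = 1 - 1/d$ — together with the trivial check that no factor on either side is negative so that the term-wise inequalities can be multiplied.
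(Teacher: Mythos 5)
Your proof is correct and uses essentially the same argument as the paper: write both sides as falling factorials over $d!$ (resp.\ $(d-1)!$), and compare termwise using $m \ge \alpha n$ and $\alpha n - i \ge \alpha(n-1-i)$, which reduces to $\alpha \ge i/(i+1)$ and holds since $i \le d-1 = \alpha d$. You handle the edge cases and the nonnegativity of factors more explicitly than the paper, but the core computation is the same.
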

\begin{proof}
Both inequalities above follow by noting that $(\alpha n - i) = \alpha(n - i/\alpha) \geq \alpha (n - i - 1)$ for all $i \in \{0, \ldots, d-1\}$ and then expanding the binomial coefficient on the left side. 
\end{proof}

\end{document}